\newtheorem{theorem}{Theorem}
\newtheorem{lemma}[theorem]{Lemma}
\newtheorem{proposition}[theorem]{Proposition}
\newenvironment{proof}[1][Proof]{\textbf{#1.} }{\ \rule{0.5em}{0.5em}}
\begin{document}

\title{The Fr\"{o}licher-Nijenhuis Calculus in Synthetic Differential Geometry}
\author{Hirokazu Nishimura\\Institute of Mathematics, University of Tsukuba\\Tsukuba, Ibaraki, 305-8571\\Japan}
\maketitle

\begin{abstract}
Just as the Jacobi identity of vector fields is a natural consequence of the
general Jacobi identity of microcubes in synthetic differential geometry, it
is to be shown in this paper that the graded Jacobi identity of the
Fr\"{o}licher-Nijenhuis bracket is also a natural consequence of the general
Jacobi identity of microcubes.

\end{abstract}

\section{Introduction}

It has long been known that the totality of vector fields on a well-behaved
space forms a Lie algebra. Since vector fields and their corresponding
derivations can not be identified in synthetic differential geometry, it is by
no means direct to establish this fact synthetically. It was Nishimura
\cite{n1} that noted, behind the Jacobi identity of vector fields, what is to
be called the general Jacobi identity of microcubes.

The Fr\"{o}licher-Nijenhuis bracket, discussed in \cite{fn} and \cite{n0}, is
a natural extension of the Lie bracket of vector fields to
tangent-vector-valued differential forms. The principal objective in this
paper is to derive the graded Jacobi identity for the Fr\"{o}licher-Nijenhuis
bracket from the general Jacobi identity synthetically. The interior
derivation and the Lie derivation are discussed in passing.

\section{Preliminaries}

We assume that the reader is familiar with Lavendhomme's textbook \cite{l1} on
synthetic differential geometry up to Chapter 5. We denote by $D$ the subset
of $\mathbb{R}$ (the extended set of real numbers satisfying the Kock-Lawvere
axiom) consisting of elements $d$ of $\mathbb{R}$ with $d^{2}=0$. Given a
function $F:D\rightarrow\mathbf{E}$ of $D$ into a Euclidean space $\mathbf{E}%
$, we write $\mathbf{D}F$ for the entity of $\mathbf{E}$\ characterized by
\[
F(d)=F(0)+d\mathbf{D}F
\]
for any $d\in D$.

Given a microlinear space $M$, we denote $M^{D}$ by $\mathbf{T}M$. The notion
of strong difference $\overset{\cdot}{-}$ was introduced by Kock and
Lavendhomme \cite{kl} into synthetic differential geometry. The following
proposition belongs to the folklore.

\begin{proposition}
For any function $f:M\rightarrow N$ of microlinear spaces and any $\gamma
_{1},\gamma_{2}\in M^{D^{2}}$ with $\gamma_{1}\mid_{D(2)}=\gamma_{2}%
\mid_{D(2)}$, we have
\[
f\circ\gamma_{1}\overset{\cdot}{-}f\circ\gamma_{2}=f\circ\left(  \gamma
_{1}\overset{\cdot}{-}\gamma_{2}\right)
\]

\end{proposition}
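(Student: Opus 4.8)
The plan is to read the identity off the very construction of the strong difference, which is carried out from a quasi-colimit that every microlinear space perceives; naturality of that construction in the microlinear space then produces the identity through the uniqueness clause of the relevant universal property. Recall first that the amalgamated sum $D^{2}+_{D(2)}D^{2}$ of two copies of $D^{2}$ along the common inclusion $\iota:D(2)\hookrightarrow D^{2}$ is a quasi-colimit, so that for every microlinear space $X$ the canonical map
\[
X^{D^{2}+_{D(2)}D^{2}}\longrightarrow X^{D^{2}}\times_{X^{D(2)}}X^{D^{2}}
\]
is a bijection; write $i_{1},i_{2}:D^{2}\to D^{2}+_{D(2)}D^{2}$ for the two structure maps and $\kappa:D\to D^{2}+_{D(2)}D^{2}$ for the canonical arrow entering the definition of $\overset{\cdot}{-}$, so that for $\delta_{1},\delta_{2}\in X^{D^{2}}$ with $\delta_{1}\mid_{D(2)}=\delta_{2}\mid_{D(2)}$ one has the unique $\overline{\delta}:D^{2}+_{D(2)}D^{2}\to X$ with $\overline{\delta}\circ i_{1}=\delta_{1}$ and $\overline{\delta}\circ i_{2}=\delta_{2}$, and by definition $\delta_{1}\overset{\cdot}{-}\delta_{2}=\overline{\delta}\circ\kappa$.

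First I would observe that the left-hand side makes sense: since $(f\circ\gamma_{j})\mid_{D(2)}=f\circ(\gamma_{j}\circ\iota)=f\circ(\gamma_{j}\mid_{D(2)})$ for $j=1,2$, the hypothesis $\gamma_{1}\mid_{D(2)}=\gamma_{2}\mid_{D(2)}$ gives $(f\circ\gamma_{1})\mid_{D(2)}=(f\circ\gamma_{2})\mid_{D(2)}$. Next, letting $\overline{\gamma}:D^{2}+_{D(2)}D^{2}\to M$ be the map classifying $(\gamma_{1},\gamma_{2})$, the composite $f\circ\overline{\gamma}:D^{2}+_{D(2)}D^{2}\to N$ satisfies $(f\circ\overline{\gamma})\circ i_{1}=f\circ\gamma_{1}$ and $(f\circ\overline{\gamma})\circ i_{2}=f\circ\gamma_{2}$, so by the uniqueness half of the quasi-colimit property applied to $N$ it is precisely the map classifying $(f\circ\gamma_{1},f\circ\gamma_{2})$. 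Hence the assertion follows from the chain
\[
f\circ\gamma_{1}\overset{\cdot}{-}f\circ\gamma_{2}=(f\circ\overline{\gamma})\circ\kappa=f\circ(\overline{\gamma}\circ\kappa)=f\circ(\gamma_{1}\overset{\cdot}{-}\gamma_{2}).
\]

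The only step carrying any weight is the bookkeeping with the universal property: one must be certain that nothing in the definition of $\overset{\cdot}{-}$ depends on a choice tied to the ambient space, so that $\delta\mapsto\overline{\delta}\mapsto\overline{\delta}\circ\kappa$ is genuinely natural — which is clear once stated, since $\iota,i_{1},i_{2},\kappa$ are fixed arrows among the infinitesimal objects $D(2),D^{2},D$ and $\delta\mapsto\overline{\delta}$ is functorial by the uniqueness clause. If one instead starts from the characterization of $\gamma_{1}\overset{\cdot}{-}\gamma_{2}$ as the unique $t\in M^{D}$ with $\gamma_{2}\overset{\cdot}{+}t=\gamma_{1}$, the same strategy applies: one first shows that $f$ commutes with the strong sum, $f\circ(\gamma_{2}\overset{\cdot}{+}t)=(f\circ\gamma_{2})\overset{\cdot}{+}(f\circ t)$ — once more the naturality of a quasi-colimit construction — and then, since $\gamma_{2}\overset{\cdot}{+}t=\gamma_{1}$ forces $(f\circ\gamma_{2})\overset{\cdot}{+}(f\circ t)=f\circ\gamma_{1}$, concludes $f\circ\gamma_{1}\overset{\cdot}{-}f\circ\gamma_{2}=f\circ t=f\circ(\gamma_{1}\overset{\cdot}{-}\gamma_{2})$ by uniqueness of the term to be added.
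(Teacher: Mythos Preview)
Your argument is correct. The paper does not actually supply a proof of this proposition: it is stated with the remark that it ``belongs to the folklore'' and left at that. Your write-up therefore fills a genuine gap rather than paralleling an existing argument.

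The route you take --- identifying the pair $(\gamma_{1},\gamma_{2})$ with a single arrow $\overline{\gamma}$ out of the amalgamated infinitesimal object $D^{2}+_{D(2)}D^{2}$, observing that the strong difference is obtained by precomposing with a fixed arrow $\kappa:D\to D^{2}+_{D(2)}D^{2}$ independent of the ambient space, and then reading off naturality from uniqueness in the quasi-colimit property for $N$ --- is exactly the kind of reasoning the folklore remark is gesturing at. One small point worth making explicit for a reader of Lavendhomme's book is that the Weil algebra of $D^{2}+_{D(2)}D^{2}$ is indeed a genuine (finite-dimensional, local) Weil algebra, so that both $M$ and $N$, being microlinear, perceive the relevant diagram as a colimit; you use this implicitly when you invoke the bijection $X^{D^{2}+_{D(2)}D^{2}}\cong X^{D^{2}}\times_{X^{D(2)}}X^{D^{2}}$ for $X=M$ and $X=N$. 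Your alternative via the strong sum $\overset{\cdot}{+}$ is equally valid and rests on the same mechanism.
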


The notion of strong difference $\overset{\cdot}{-}$ can be relativized. Since
$M^{D^{3}}=(M^{D})^{D^{2}}$, microcubes on $M$ can be viewed as microsquares
on $M^{D}$. According to which $D$ in the right-hand side of $D^{3}=D\times
D\times D$ appears as the superscript just over $M$, we get the three
relativized strong differences $\underset{i}{\overset{\cdot}{-}}$ $(i=1,2,3)$,
for which we have the following general Jacobi identity.

\begin{theorem}
Let $\gamma_{123},\gamma_{132},\gamma_{213},\gamma_{231},\gamma_{312}%
,\gamma_{321}\in M^{D^{3}}$. As long as the following three expressions are
well defined, they sum up only to vanish:
\begin{align*}
&  (\gamma_{123}\overset{\cdot}{\underset{1}{-}}\gamma_{132})\overset{\cdot
}{-}(\gamma_{231}\overset{\cdot}{\underset{1}{-}}\gamma_{321})\\
&  (\gamma_{231}\overset{\cdot}{\underset{2}{-}}\gamma_{213})\overset{\cdot
}{-}(\gamma_{312}\overset{\cdot}{\underset{2}{-}}\gamma_{132})\\
&  (\gamma_{312}\overset{\cdot}{\underset{3}{-}}\gamma_{321})\overset{\cdot
}{-}(\gamma_{123}\overset{\cdot}{\underset{3}{-}}\gamma_{213})
\end{align*}

\end{theorem}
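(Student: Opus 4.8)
The plan is to amalgamate the six microcubes, together with all the infinitesimal data that is manufactured when the strong differences are formed, into a single map $\Gamma$ out of a suitably glued infinitesimal object $\mathcal{D}$, and then to obtain the asserted identity as a single $\mathbb{R}$-linear relation among three tangent vectors of $\mathcal{D}$, pushed forward by $\Gamma$.

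\emph{Step 1: the quasi-colimit behind a strong difference.} First I would recall, from \cite{l1}, the quasi-colimit diagram underlying $\overset{\cdot}{-}$: for a microlinear space $N$, a pair $\gamma_{1},\gamma_{2}\in N^{D^{2}}$ with $\gamma_{1}\mid_{D(2)}=\gamma_{2}\mid_{D(2)}$ corresponds, via the pushout of two copies of $D^{2}$ along $D(2)$ — which is $D^{2}$ with an extra copy of $D$ attached at the origin — to a microsquare equipped with a tangent vector, namely $\gamma_{1}\overset{\cdot}{-}\gamma_{2}$. Since $M^{D^{3}}=(M^{D})^{D^{2}}=((M^{D})^{D})^{D}$, the same description, applied at the level of the $i$-th coordinate, governs each relativized strong difference $\overset{\cdot}{\underset{i}{-}}$. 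Assuming the compatibility conditions that make the three displayed expressions well defined, one checks that those conditions in particular force $\gamma_{123}(0,0,0)=\cdots=\gamma_{321}(0,0,0)=:p$, so each expression is a tangent vector of $M$ at $p$ and their sum is meaningful.

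\emph{Step 2: the glued object $\mathcal{D}$.} Next I would build $\mathcal{D}$ by describing its Weil algebra $W_{\mathcal{D}}$ as the limit of the finite diagram of Weil algebras whose vertices are six copies of $W_{D^{3}}=\mathbb{R}[x,y,z]/(x^{2},y^{2},z^{2})$, one per permutation, together with the auxiliary copies of $W_{D}$ that arise from the strong differences, the arrows being exactly the restrictions dictated by those agreement conditions. Because $M$ is microlinear, $\mathcal{D}$ being a genuine quasi-colimit then gives that a compatible family $(\gamma_{\sigma})_{\sigma}$ is the same thing as a single $\Gamma:\mathcal{D}\rightarrow M$ precomposed with canonical inclusions $\iota_{\sigma}:D^{3}\hookrightarrow\mathcal{D}$. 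Using Proposition 1 — and its relativizations, obtained by applying $(-)^{D}$ so as to work over the microlinear spaces $M^{D}$, $M^{D^{2}}$ — one commutes $\Gamma$ past every strong difference, so that the $i$-th displayed expression becomes $\Gamma$ composed with the corresponding iterated strong difference $v_{i}$ computed in $\mathcal{D}$ (which is microlinear, being an infinitesimal object) with the $\iota_{\sigma}$ in place of the $\gamma_{\sigma}$. The point is that each $v_{i}$ is now independent of $\Gamma$ and is a tangent vector of $\mathcal{D}$ at $p$.

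\emph{Step 3: the linear relation, and the main obstacle.} It then remains to verify the single identity $v_{1}+v_{2}+v_{3}=0$ in the tangent space of $\mathcal{D}$ at $p$; applying $\Gamma^{D}$, which is fibrewise $\mathbb{R}$-linear because $M$ is microlinear, yields the theorem. I expect Steps 2 and 3 to be the real work: one must pin down the minimal collection of agreement conditions under which all three expressions are simultaneously defined, check that the resulting diagram of infinitesimal objects is a quasi-colimit — a finite-dimensional linear-algebra computation in the Weil algebras, of exactly the sort that underlies the passage from pushouts of infinitesimal objects to pullbacks along microlinear spaces — and finally unwind the relativized strong-difference definitions to see that the three "cyclic $xyz$-generators" of $W_{\mathcal{D}}$ cancel. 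That cancellation is the combinatorial heart of the matter: it is the three-dimensional analogue of the telescoping that produces the classical Jacobi identity for vector fields, the six permutations entering with the two opposite signs contributed by the two microsquares subtracted inside each strong difference.
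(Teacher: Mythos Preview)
The paper does not actually prove this theorem. It is stated in the Preliminaries section and immediately followed by the remark that ``the theorem was established by Nishimura \cite{n1} and has been reproved twice by himself in \cite{n2} and \cite{n3}.'' There is therefore no in-paper proof to compare your proposal against.

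That said, your sketch is broadly in the spirit of the original argument in \cite{n1}: one amalgamates the compatible family of microcubes into a single map out of a glued infinitesimal object via a quasi-colimit, pushes the strong differences through using naturality (your Step~2 use of Proposition~1 and its relativizations), and reduces the whole question to a finite linear-algebra identity among generators in the associated Weil algebra. The honest caveats you flag in Step~3 --- pinning down the minimal agreement conditions, verifying the quasi-colimit, and carrying out the cancellation --- are exactly where the real content lies, and your outline does not execute them; so what you have is a correct strategic plan rather than a proof. If you want to complete it, the references \cite{n1}, \cite{n2}, \cite{n3} give three distinct executions of precisely this program.
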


The theorem was established by Nishimura \cite{n1} and has been reproved twice
by himself in \cite{n2} and \cite{n3}.

We use the notion of \textit{linear connection} in the sense of Definition 1
in \S \S 5.1 of Lavendhomme \cite{l1}. Given a linear connection $\nabla$ on a
microlinear space $M$ and a linear connection $\nabla^{\prime}$ on a
microlinear space $N$ with a function $f:M\rightarrow N$, we say that
$\nabla^{\prime}$ \textit{is }$f$\textit{-related to} $\nabla$ provided that
\[
f\circ\nabla(t_{1},t_{2})=\nabla^{\prime}(f\circ t_{1},f\circ t_{2})
\]
for any $t_{1},t_{2}\in\mathbf{T}M$ with $t_{1}(0)=t_{2}(0)$. We will often
write $\nabla\gamma$ for $\nabla(t_{1},t_{2})$, where $t_{1}=\gamma(\cdot,0)$
and $t_{2}=\gamma(0,\cdot)$.

We write $\mathbb{S}_{n}$ for the permutation group of the first $n$ natural
numbers, namely, $1,...,n$. Given $\gamma\in M^{D^{n}}$ and $\sigma
\in\mathbb{S}_{n}$, we define $\gamma^{\sigma}\in M^{D^{n}}$ to be
\[
\gamma^{\sigma}(d_{1},...,d_{n})=\gamma(d_{\sigma(1)},...,d_{\sigma(n)})
\]
for any $(d_{1},...,d_{n})\in D^{n}$. Given $\gamma\in M^{D^{n}}$, we write
$\mathfrak{o}_{n}(\gamma)$ for $\gamma(0,...,0)$.

\section{Tangent-Vector-Valued Differential Forms}

It is well known in synthetic differential geometry that vector fields can be
viewed in three different but essentially equivalent ways, namely, as sections
of the tangent bundle, as infinitesimal flows and as infinitesimal
transformations, for which the reader is referred to \S 3.2 of Lavendhomme
\cite{l1}. These three viewpoints can easily be extended to
tangent-vector-valued differential forms. Let $M$ be a microlinear space. The
first orthodox viewpoint is to regard tangent-vector-valued differential
$p$-forms on $M$ as mappings $K:M^{D^{p}}\rightarrow M^{D}$ with
$\mathfrak{o}_{p}(\gamma)=\mathfrak{o}_{1}(K(\gamma))$ for any $\gamma\in
M^{D^{p}}$ and satisfying the $p$-homogeneity and the alternating \ property
in the sense of Definition 1 in \S 4.1 of Lavendhomme \cite{l1}. The second
viewpoint goes as follows.

\begin{proposition}
\label{2.1}Tangent-vector-valued differential $p$-forms on $M$ can be
identified with mappings $K:D\times M^{D^{p}}\rightarrow M$ pursuant to the
following conditions:

\begin{enumerate}
\item $K(0,\gamma)=\mathfrak{o}_{p}(\gamma)$ for any $\gamma\in M^{D^{p}}$.

\item $K(\alpha d,\gamma)=K(d,\alpha\underset{i}{\cdot}\gamma)$ for any $d\in
D$, any $\alpha\in\mathbb{R}$, any $\gamma\in M^{D^{p}}$ and any natural
number $i$ with $1\leq i\leq p$.

\item $K(d,\gamma^{\sigma})=K(\varepsilon_{\sigma}d,\gamma)$ for any $d\in D
$, any $\gamma\in M^{D^{p}}$ and any $\sigma\in\mathbb{S}_{p}$.
\end{enumerate}
\end{proposition}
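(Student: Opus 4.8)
The plan is to realise the asserted identification as the cartesian-closed adjunction between mappings $D\times M^{D^{p}}\rightarrow M$ and mappings $M^{D^{p}}\rightarrow M^{D}$, and then to check that, under this adjunction, conditions 1, 2 and 3 correspond exactly to the base-point condition, the $p$-homogeneity and the alternating property of the orthodox viewpoint. Concretely, to a mapping $K:M^{D^{p}}\rightarrow M^{D}$ of the first kind I associate $\widehat{K}:D\times M^{D^{p}}\rightarrow M$ given by $\widehat{K}(d,\gamma)=K(\gamma)(d)$, and conversely to a mapping $\widehat{K}:D\times M^{D^{p}}\rightarrow M$ I associate $K:M^{D^{p}}\rightarrow M^{D}$ by letting $K(\gamma)$ be the function $d\mapsto\widehat{K}(d,\gamma)$ (which is automatically an element of $M^{D}$). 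These two assignments are mutually inverse at the level of all mappings, so the content of the proposition is entirely the matching of the side conditions.

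First I would dispose of the base point. Since $\mathfrak{o}_{1}(K(\gamma))=K(\gamma)(0)=\widehat{K}(0,\gamma)$, the requirement $\mathfrak{o}_{p}(\gamma)=\mathfrak{o}_{1}(K(\gamma))$ holding for all $\gamma\in M^{D^{p}}$ is verbatim condition 1; moreover it is precisely condition 1 that ensures $K(\gamma)$ lies in the fibre of $\mathbf{T}M$ over $\mathfrak{o}_{p}(\gamma)$, so that the fibrewise $\mathbb{R}$-module operations invoked below are legitimate. Next I would recall from Lavendhomme \cite{l1} that for $t\in\mathbf{T}M$ and $\alpha\in\mathbb{R}$ the scalar multiple $\alpha\cdot t$ is the tangent vector $d\mapsto t(\alpha d)$, and that $\alpha\underset{i}{\cdot}\gamma$ is obtained from $\gamma$ by replacing its $i$-th argument $d_{i}$ by $\alpha d_{i}$. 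Unwinding $p$-homogeneity of $K$ through the adjunction then gives, for every $i$,
\[
\widehat{K}(d,\alpha\underset{i}{\cdot}\gamma)=K(\alpha\underset{i}{\cdot}\gamma)(d)=\bigl(\alpha\cdot K(\gamma)\bigr)(d)=K(\gamma)(\alpha d)=\widehat{K}(\alpha d,\gamma),
\]
which is condition 2, and reading the chain of equalities backwards establishes the converse implication. Likewise, evaluating the alternating property $K(\gamma^{\sigma})=\varepsilon_{\sigma}\cdot K(\gamma)$ at $d\in D$ yields $\widehat{K}(d,\gamma^{\sigma})=K(\gamma)(\varepsilon_{\sigma}d)=\widehat{K}(\varepsilon_{\sigma}d,\gamma)$, which is condition 3, and again the computation is reversible.

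The verification is thus essentially bookkeeping, and the one point that calls for genuine care — the main obstacle, such as it is — is getting the dictionary between the two formalisms right: Lavendhomme's $p$-homogeneity and alternating conditions are phrased in terms of the fibrewise $\mathbb{R}$-module structure on $\mathbf{T}M$, and one must check that, written out pointwise on $D$, scalar multiplication by $\alpha$ is exactly the reparametrisation $d\mapsto\alpha d$ and the sign $\varepsilon_{\sigma}$ acts by $d\mapsto\varepsilon_{\sigma}d$. This is also the only place where microlinearity of $M$ is used, guaranteeing that these operations make the fibres of $\mathbf{T}M$ into $\mathbb{R}$-modules in the first place. Once these identifications are in hand, no further infinitesimal reasoning is required: the two descriptions are matched termwise by the exponential law, and it remains only to note that $K\mapsto\widehat{K}$ and $\widehat{K}\mapsto K$ are inverse bijections, which is immediate.
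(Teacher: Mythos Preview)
Your proposal is correct and follows exactly the approach the paper takes: the paper's proof consists solely of invoking the set-theoretical identity $(M^{D})^{M^{D^{p}}}=M^{D\times M^{D^{p}}}$ and leaving the details to the reader, and what you have written is precisely those details spelled out.
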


\begin{proof}
This follows from the set-theoretical identity
\[
(M^{D})^{M^{D^{p}}}=M^{D\times M^{D^{p}}}%
\]
The details can safely be left to the reader.
\end{proof}

Given $\varphi\in M^{M^{D^{p}}}$ and $\alpha\in\mathbb{R}$, we define
$\alpha\underset{i}{\cdot}\varphi\in M^{M^{D^{p}}}$ ($1\leq i\leq p$) to be
\[
(\alpha\underset{i}{\cdot}\varphi)(\gamma)=\varphi(\alpha\underset{i}{\cdot
}\gamma)
\]
for any $\gamma\in M^{D^{p}}$. Given $\varphi\in M^{M^{D^{p}}}$ and any
$\sigma\in\mathbb{S}_{p}$, we define $\varphi^{\sigma}\in M^{M^{D^{p}}}$ to
be
\[
\varphi^{\sigma}(\gamma)=\varphi(\gamma^{\sigma})
\]
for any $\gamma\in M^{D^{p}}$. Given $\varphi\in M^{M^{D^{p}}}$ and
$\sigma,\tau\in\mathbb{S}_{p}$, it is easy to see that
\[
\varphi^{\sigma\tau}(\gamma)=\varphi(\gamma^{\sigma\tau})=\varphi
((\gamma^{\sigma})^{\tau})=\varphi^{\tau}(\gamma^{\sigma})=(\varphi^{\tau
})^{\sigma}(\gamma)
\]
for any $\gamma\in M^{D^{p}}$, so that $\varphi^{\sigma\tau}=(\varphi^{\tau
})^{\sigma}$. The third viewpoint goes as follows.

\begin{proposition}
\label{2.2}Tangent-vector-valued differential $p$-forms on $M$ can be
identified with mappings $K:D\rightarrow M^{M^{D^{p}}}$ satisfying the
following conditions:

\begin{enumerate}
\item $K_{0}=\mathfrak{o}_{p}$

\item $\alpha\underset{i}{\cdot}K_{d}=K_{\alpha d}$ for any $d\in D$, any
$\alpha\in\mathbb{R}$ and any natural number $i$ with $1\leq i\leq p$.

\item $(K_{d})^{\sigma}=K_{\varepsilon_{\sigma}d}$ for any $d\in D$ and any
$\sigma\in\mathbb{S}_{p}$.
\end{enumerate}
\end{proposition}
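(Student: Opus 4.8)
The plan is to reduce the statement to Proposition \ref{2.1} by applying the exponential adjunction once more, this time in the form
\[
M^{D\times M^{D^{p}}}=\left(  M^{M^{D^{p}}}\right)  ^{D}.
\]
Exactly as in the proof of Proposition \ref{2.1}, this set-theoretical identity furnishes a bijective correspondence between mappings $K:D\times M^{D^{p}}\rightarrow M$ and mappings $\widetilde{K}:D\rightarrow M^{M^{D^{p}}}$, namely $\widetilde{K}_{d}(\gamma)=K(d,\gamma)$ for $d\in D$ and $\gamma\in M^{D^{p}}$. In view of Proposition \ref{2.1}, it then suffices to verify that under this correspondence the three conditions figuring in Proposition \ref{2.1} translate, respectively, into the three conditions of Proposition \ref{2.2}.

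The verification is a matter of unwinding the definitions of $\alpha\underset{i}{\cdot}\varphi$ and $\varphi^{\sigma}$ introduced just before the proposition, together with the reading of $\mathfrak{o}_{p}$ as an element of $M^{M^{D^{p}}}$. Condition 1 of Proposition \ref{2.1}, being an equality of values of $\widetilde{K}_{0}$ and $\mathfrak{o}_{p}$ at every $\gamma$, is precisely $\widetilde{K}_{0}=\mathfrak{o}_{p}$. For condition 2 I would note that $(\alpha\underset{i}{\cdot}\widetilde{K}_{d})(\gamma)=\widetilde{K}_{d}(\alpha\underset{i}{\cdot}\gamma)=K(d,\alpha\underset{i}{\cdot}\gamma)$ while $\widetilde{K}_{\alpha d}(\gamma)=K(\alpha d,\gamma)$, so that condition 2 of Proposition \ref{2.1}, asserted for all $\gamma$, is equivalent to $\alpha\underset{i}{\cdot}\widetilde{K}_{d}=\widetilde{K}_{\alpha d}$. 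Similarly, $(\widetilde{K}_{d})^{\sigma}(\gamma)=\widetilde{K}_{d}(\gamma^{\sigma})=K(d,\gamma^{\sigma})$ and $\widetilde{K}_{\varepsilon_{\sigma}d}(\gamma)=K(\varepsilon_{\sigma}d,\gamma)$, so condition 3 of Proposition \ref{2.1} is equivalent to $(\widetilde{K}_{d})^{\sigma}=\widetilde{K}_{\varepsilon_{\sigma}d}$.

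Since each of the three conditions transfers across the bijection in both directions, composing it with the identification supplied by Proposition \ref{2.1} yields the claimed correspondence. I expect no genuine obstacle here: the argument is purely formal, and the only point demanding a little care is the faithful matching of the pointwise definitions of $\alpha\underset{i}{\cdot}\varphi$ and $\varphi^{\sigma}$ with conditions 2 and 3 respectively, which is bookkeeping rather than mathematics; accordingly the details can safely be left to the reader, as in Proposition \ref{2.1}.
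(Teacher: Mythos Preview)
Your proposal is correct and is essentially the same argument as the paper's: both rest on a currying identity and leave the routine translation of the three conditions to the reader. The only cosmetic difference is that the paper invokes the adjunction $(M^{D})^{M^{D^{p}}}=(M^{M^{D^{p}}})^{D}$ directly from the first (section-of-the-tangent-bundle) description, whereas you factor through Proposition~\ref{2.1} via $M^{D\times M^{D^{p}}}=(M^{M^{D^{p}}})^{D}$; composing the two adjunctions recovers the paper's single step, so nothing of substance separates the arguments.
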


\begin{proof}
This follows from the set-theoretical identity
\[
(M^{D})^{M^{D^{p}}}=(M^{M^{D^{p}}})^{D}%
\]
The details can safely be left to the reader.
\end{proof}

We will use the above three viewpoints on tangent-vector-valued differential
forms interchangeably, though we prefer the last one to the preceding two. The
following lemma, which will be used in the next section, should be obvious.

\begin{lemma}
\label{2.3}For any mappings $K,L:D^{2}\rightarrow M^{M^{D^{p}}}$ with
\begin{align*}
K(d,0) &  =L(d,0)\\
K(0,d) &  =L(0,d)
\end{align*}
for any $d\in D$, we have
\begin{align*}
\{(d_{1},d_{2}) &  \in D^{2}\mapsto L(d_{1},d_{2})^{\sigma}\}\overset{\cdot
}{-}\\
\{(d_{1},d_{2}) &  \in D^{2}\mapsto K(d_{1},d_{2})^{\sigma}\}\\
&  =d\in D\mapsto((L\overset{\cdot}{-}K)_{d})^{\sigma}%
\end{align*}
for any $\sigma\in\mathbb{S}_{p}$. Similar formulas hold for $\underset
{i}{\overset{\cdot}{-}}$ ($i=1,2,3$).
\end{lemma}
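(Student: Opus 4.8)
The plan is to reduce the claimed identity to the defining property of the strong difference $\overset{\cdot}{-}$ together with naturality (Proposition~1). Recall that for two microsquares $\Gamma_1,\Gamma_2 : D^2 \to X$ (here $X = M^{M^{D^p}}$) that agree on $D(2)$, the strong difference $\Gamma_1 \overset{\cdot}{-} \Gamma_2$ is the tangent vector at $\mathfrak{o}(\Gamma_1)$ obtained by a quotient construction: one extends the common restriction and measures the discrepancy along the extra $D$-direction appearing in $D^2 \simeq D(2) \vee D$. The hypotheses $K(d,0)=L(d,0)$ and $K(0,d)=L(0,d)$ are exactly what guarantees that $K\mid_{D(2)} = L\mid_{D(2)}$, so $L \overset{\cdot}{-} K$ is well defined; and postcomposing everywhere with the fixed map $(\cdot)^\sigma : M^{M^{D^p}} \to M^{M^{D^p}}$ evidently preserves the agreement on $D(2)$, so both sides of the asserted equation make sense.

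First I would set $f : M^{M^{D^p}} \to M^{M^{D^p}}$ to be the map $\varphi \mapsto \varphi^\sigma$, and observe that $f$ is (at worst) a morphism of microlinear spaces — indeed it is linear in each of the $p$ slots after identifying $M^{M^{D^p}}$ appropriately, and in any case composition with a fixed endomorphism of the exponential is built from compositions and products, hence preserves microlinearity. Then the two microsquares $(d_1,d_2) \mapsto L(d_1,d_2)^\sigma$ and $(d_1,d_2) \mapsto K(d_1,d_2)^\sigma$ are precisely $f \circ L$ and $f \circ K$. Second, I would verify the hypothesis of Proposition~1 in the relativized-to-$\mathbf{T}$ form that is implicit here: $K$ and $L$, viewed as elements of $\left(M^{M^{D^p}}\right)^{D^2}$, restrict to the same element of $\left(M^{M^{D^p}}\right)^{D(2)}$ because they agree on the two axes $D \times \{0\}$ and $\{0\} \times D$, and $D(2)$ is spanned set-theoretically by those two axes together with the origin (more precisely, a map out of $D(2)$ is determined by its restrictions to the two copies of $D$ that meet only at $0$). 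Third, I would invoke Proposition~1 with this $f$, $\gamma_1 = L$, $\gamma_2 = K$ to conclude
\[
f\circ L \overset{\cdot}{-} f\circ K = f\circ(L \overset{\cdot}{-} K),
\]
and then unwind both sides: the left side is literally $\{(d_1,d_2)\mapsto L(d_1,d_2)^\sigma\} \overset{\cdot}{-} \{(d_1,d_2)\mapsto K(d_1,d_2)^\sigma\}$, while the right side is $d \mapsto f\big((L\overset{\cdot}{-}K)_d\big) = d \mapsto \big((L\overset{\cdot}{-}K)_d\big)^\sigma$, which is exactly the right-hand side of the Lemma. For the relativized differences $\underset{i}{\overset{\cdot}{-}}$, the same argument applies verbatim using the relativized version of Proposition~1 (which holds because strong difference commutes with postcomposition in each slot of an iterated exponential), with $M^{D^3}$ playing the role of $M^{D^2}$ and the agreement hypothesis on the appropriate $D(2)$-skeleton being supplied by the analogue of $K(d,0)=L(d,0)$, $K(0,d)=L(0,d)$.

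The only genuinely delicate point — and the one I would be most careful about — is checking that $f = (\cdot)^\sigma$ really does satisfy the hypotheses of Proposition~1 as stated, since that proposition is phrased for a \emph{function} $f : M' \to N'$ of microlinear spaces, and here $M' = N' = M^{M^{D^p}}$, which is microlinear whenever $M$ is (exponentials of microlinear spaces are microlinear), so there is no obstruction on that score; what must be confirmed is merely that permuting the $D$-coordinates of the source $D^p$ is a well-defined endomap of this exponential, which is immediate from the definition $\varphi^\sigma(\gamma) = \varphi(\gamma^\sigma)$ given just above the Lemma. Everything else is bookkeeping: matching the $D(2)$-restriction hypothesis to the stated axis-agreement, and recognizing the four displayed composites as instances of $f \circ \gamma_1$, $f \circ \gamma_2$. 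I therefore expect this to be a short proof whose substance is the single application of Proposition~1; accordingly I would not belabor the routine verifications, in keeping with the paper's stated convention of leaving such details to the reader.
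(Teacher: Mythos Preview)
Your proposal is correct: applying Proposition~1 with $f=(\cdot)^{\sigma}:M^{M^{D^{p}}}\to M^{M^{D^{p}}}$ (a map between microlinear spaces, since exponentials of microlinear spaces are microlinear) is exactly the intended argument, and your verification that the axis-agreement hypothesis yields $K\!\mid_{D(2)}=L\!\mid_{D(2)}$ is the only substantive check. The paper itself offers no proof, declaring the lemma ``obvious,'' so your write-up is already more detailed than the original.
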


We will write $\Omega^{k}(M;\mathbf{T}M)$ for the totality of
tangent-vector-valued differential $k$-forms on $M$. Given $K\in\Omega
^{k}(M;\mathbf{T}M)$, $K^{\prime}\in\Omega^{k}(N;\mathbf{T}N)$ and
$f:M\rightarrow N$, we say that $K^{\prime}$ is $f$\textit{-related} to $K$ if
we have
\[
K_{d}^{\prime}(f\circ\gamma)=f(K_{d}(\gamma))
\]
for any $d\in D$ and any $\gamma\in M^{D}$.

If we drop the condition of the alternating property while keeping the
$k$-homogeneity in the definition of a tangent-vector-valued differential
$k$-form on $M$, we get the notion of a tangent-vector-valued differential
$k$-semiform on $M$. We denote by $\widehat{\Omega}^{k}(M;\mathbf{T}M)$ the
totality of tangent-vector-valued differential $k$-semiforms on $M$. Given
$K\in\widehat{\Omega}^{k}(M;\mathbf{T}M)$, we define $\mathcal{A}K\in
\Omega^{k}(M;\mathbf{T}M)$ to be
\[
\mathcal{A}K(\gamma)=\sum_{\sigma\in\mathbb{S}_{k}}\varepsilon_{\sigma
}K(\gamma^{\sigma})
\]
for any $\gamma\in M^{D^{k}}$, where $\varepsilon_{\sigma}$ is the sign of
$\sigma$. We write $\mathcal{A}_{p,q}K$ and $\mathcal{A}_{p,q,r}K$ for
$(1/p!q!)\mathcal{A}K$ and $(1/p!q!r!)\mathcal{A}K$ respectively.

\section{Interior Derivations}

Given $K\in\Omega^{k+1}(M;\mathbf{T}M)$ and $L\in\Omega^{l}(M;\mathbf{T}M)$,
we define $\widehat{\mathbf{i}}_{K}L\in\widehat{\Omega}^{k+l}(M;\mathbf{T}M)$
to be
\begin{align*}
(\widehat{\mathbf{i}}_{K}L)(\gamma)  &  =L\{(e_{1},...,e_{l})\in D^{l}\mapsto
K_{e_{1}}((d_{1},...d_{k+1})\in D^{k+1}\\
&  \mapsto\gamma(d_{1},...,d_{k+1},e_{2},...,e_{l}))\}
\end{align*}
for any $\gamma\in M^{D^{k+l}}$. Obviously we have to verify that

\begin{proposition}
We have
\[
\widehat{\mathbf{i}}_{K}L\in\widehat{\Omega}^{k+l}(M;\mathbf{T}M)
\]

\end{proposition}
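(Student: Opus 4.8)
The plan is to verify that the formula defining $\widehat{\mathbf{i}}_{K}L$ actually produces a tangent-vector-valued differential $(k+l)$-semiform, i.e.\ that it satisfies the base-point condition $\mathfrak{o}_{k+l}(\gamma)=\mathfrak{o}_{1}((\widehat{\mathbf{i}}_{K}L)(\gamma))$ and the $(k+l)$-homogeneity property of Definition 1 in \S 4.1 of Lavendhomme. The alternating property is deliberately \emph{not} required here, since the target is $\widehat{\Omega}^{k+l}(M;\mathbf{T}M)$, so there is nothing to check on that front. Throughout I will use the first (orthodox) viewpoint for $\widehat{\mathbf{i}}_{K}L$ as a map $M^{D^{k+l}}\to M^{D}$, while reading $K$ as a map $M^{D^{k+1}}\to M^{D}$ and $L$ as a map $M^{D^{l}}\to M^{D}$, all inheriting their homogeneity and base-point properties from the hypotheses $K\in\Omega^{k+1}(M;\mathbf{T}M)$ and $L\in\Omega^{l}(M;\mathbf{T}M)$.

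First I would record, for a fixed $\gamma\in M^{D^{k+l}}$, the auxiliary microsquare-type object
\[
\Gamma_{\gamma}(e_{1},\dots,e_{l})=\big((d_{1},\dots,d_{k+1})\in D^{k+1}\mapsto \gamma(d_{1},\dots,d_{k+1},e_{2},\dots,e_{l})\big),
\]
so that $(\widehat{\mathbf{i}}_{K}L)(\gamma)=L\big((e_{1},\dots,e_{l})\mapsto K_{e_{1}}(\Gamma_{\gamma}(e_{1},\dots,e_{l}))\big)$; note $\Gamma_\gamma$ in fact ignores its first argument $e_1$, which is supplied instead to $K$. For the base-point condition, set all $e_i=0$: then $\Gamma_\gamma(0,\dots,0)$ is the constant microcube at $\mathfrak{o}_{k+l}(\gamma)$, so $K_{0}$ applied to it is $\mathfrak{o}_{k+1}$ of that constant cube, namely the constant tangent vector at $\mathfrak{o}_{k+l}(\gamma)$; feeding the resulting constant $l$-microcube to $L$ and using the base-point condition for $L$ gives $\mathfrak{o}_{1}$ at $\mathfrak{o}_{k+l}(\gamma)$, as required.

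The homogeneity is the substantive part. I would check $\alpha\underset{j}{\cdot}$-compatibility in the $j$-th coordinate of $\gamma$ by splitting into two cases according to whether $1\le j\le k+1$ (the "$K$-block") or $k+1\le j\le k+l$ (the "$L$-block"), with $j=k+1$ landing in either description. When $j$ falls in the $K$-block, scaling the $j$-th slot of $\gamma$ scales the corresponding slot of $\Gamma_\gamma(e_1,\dots,e_l)$ as a $(k+1)$-microcube, and the $(k+1)$-homogeneity of $K$ transfers the scalar out; when $j$ falls strictly inside the $L$-block, the scalar lives on one of $e_2,\dots,e_l$, and one rewrites it using clauses 2 and 3 of Proposition~\ref{2.1}/\ref{2.2} for $K$ together with the $l$-homogeneity of $L$ to pull it through. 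The case $j=k+1$ needs the compatibility of the two descriptions, i.e.\ that scaling $e_1$ (which is routed into $K$ via $K_{e_1}$) agrees with scaling the last slot of the $(k+1)$-cube $\Gamma_\gamma$; this is exactly where the homogeneity axioms of $K$ in the form of Proposition~\ref{2.2}(2) are invoked, since $\alpha\underset{i}{\cdot}K_{e_1}=K_{\alpha e_1}$. The main obstacle I anticipate is precisely this bookkeeping at the boundary index $j=k+1$, together with keeping straight which infinitesimal variable is being fed to $K$ versus to $L$; once the two viewpoints are lined up there, each homogeneity identity reduces to a one-line application of the homogeneity of $K$ or of $L$. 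The verification is routine microlinear-space algebra of the kind Lavendhomme's \S 4.1 carries out, so I would state the reductions and leave the remaining manipulations to the reader.
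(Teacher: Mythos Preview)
Your approach is essentially the paper's---split on whether the scaled slot lies among the first $k+1$ coordinates of $\gamma$ or among the last $l-1$---but your write-up contains two confusions that obscure the argument.

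First, in the case $1\le j\le k+1$ the $(k+1)$-homogeneity of $K$ does not by itself ``transfer the scalar out'': it only moves the scalar from $d_j$ to the subscript $e_1$, via $K_{e_1}(\alpha\underset{j}{\cdot}\,\text{cube})=K_{\alpha e_1}(\text{cube})$. You then still need the homogeneity of $L$ in its \emph{first} slot to pull $\alpha$ from $e_1$ out to the overall output. This is exactly the two-step chain the paper writes out explicitly, and it is the real content of the verification; your sentence elides the second step.

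Second, there is no genuine boundary case at $j=k+1$. The index $j=k+1$ is simply the last $d$-slot $d_{k+1}$ of $\gamma$ and is handled identically to $j=1,\dots,k$; the variable $e_1$ is \emph{not} a coordinate of $\gamma$ at all (as you yourself note, $\Gamma_\gamma$ ignores it). Rather, $e_1$ is the bridge variable that carries the scalar from the $K$-side to the $L$-side in the first case. Conversely, in the second case $k+2\le j\le k+l$ the scalar sits on one of $e_2,\dots,e_l$ and only the homogeneity of $L$ in those slots is needed; $K$ is not invoked there, contrary to what your sentence suggests. Once these two points are straightened out, your outline and the paper's proof coincide.
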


\begin{proof}
Let $e$ be an arbitrary element of $D$ with $\alpha\in\mathbb{R}$. For $1\leq
i\leq k+1$, we have
\begin{align*}
&  (\widehat{\mathbf{i}}_{K}L)_{e}(\alpha\underset{i}{\cdot}\gamma)\\
&  =L_{e}\{(e_{1},...,e_{l})\in D^{l}\mapsto\\
K_{e_{1}}((d_{1},...d_{k+1})  &  \in D^{k+1}\mapsto\gamma(d_{1},...,,\alpha
d_{i},...,d_{k+1},e_{2},...,e_{l}))\}\\
&  =L_{e}\{(e_{1},...,e_{l})\in D^{l}\mapsto\\
K_{\alpha e_{1}}((d_{1},...d_{k+1})  &  \in D^{k+1}\mapsto\gamma
(d_{1},...,,d_{i},...,d_{k+1},e_{2},...,e_{l}))\}\\
&  =L_{\alpha e}\{(e_{1},...,e_{l})\in D^{l}\mapsto\\
K_{e_{1}}((d_{1},...d_{k+1})  &  \in D^{k+1}\mapsto\gamma(d_{1},...,,d_{i}%
,...,d_{k+1},e_{2},...,e_{l}))\}\\
&  =(\widehat{\mathbf{i}}_{K}L)_{\alpha e}(\gamma)
\end{align*}
while the case of $k+2\leq i\leq k+l$ can safely be left to the reader.
\end{proof}

Given $K\in\Omega^{k+1}(M;\mathbf{T}M)$ and $L\in\Omega^{l}(M;\mathbf{T}M)$,
we define $\mathbf{i}_{K}L\in\Omega^{k+l}(M;\mathbf{T}M)$ to be
\[
\mathcal{A}_{k+1,l-1}\left(  \widehat{\mathbf{i}}_{K}L\right)
\]

\begin{proposition}
Let $f:M\rightarrow N$ be a mapping. Let us suppose that $K^{\prime}\in
\Omega^{k+1}(N;\mathbf{T}N)$ is $f$-related to $K\in\Omega^{k+1}%
(M;\mathbf{T}M)$ and that $L^{\prime}\in\Omega^{l}(N;\mathbf{T}N)$ is
$f$-related to $L\in\Omega^{l}(M;\mathbf{T}M)$. Then $\mathbf{i}_{K^{\prime}%
}L^{\prime}$ is $f$-related to $\mathbf{i}_{K}L$.
\end{proposition}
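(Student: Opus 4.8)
The plan is to reduce the statement to the corresponding fact about the semiform $\widehat{\mathbf{i}}_{K}L$, since the operator $\mathcal{A}_{k+1,l-1}$ that produces $\mathbf{i}_{K}L$ from $\widehat{\mathbf{i}}_{K}L$ is built from precomposition with permutations and scalar multiplication, both of which commute with the pushforward along $f$ in the obvious way. Concretely, first I would show that $\widehat{\mathbf{i}}_{K'}L'$ is $f$-related to $\widehat{\mathbf{i}}_{K}L$, meaning that
\[
(\widehat{\mathbf{i}}_{K'}L')_{e}(f\circ\gamma)=f\bigl((\widehat{\mathbf{i}}_{K}L)_{e}(\gamma)\bigr)
\]
for every $e\in D$ and every $\gamma\in M^{D^{k+l}}$. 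This is a direct unwinding of the defining formula for $\widehat{\mathbf{i}}$: in the inner expression one has the microcube
\[
(d_{1},\dots,d_{k+1})\in D^{k+1}\mapsto\gamma(d_{1},\dots,d_{k+1},e_{2},\dots,e_{l}),
\]
and composing it with $f$ turns it into the analogous microcube built from $f\circ\gamma$; then the $f$-relatedness of $K'$ to $K$ lets one pull $f$ out of $K_{e_{1}}(\cdots)$, giving a family in $N^{D^{l}}$ of the form $f\circ(\text{the }M\text{-side family})$; finally the $f$-relatedness of $L'$ to $L$ extracts $f$ from the outermost application, yielding exactly $f$ applied to $(\widehat{\mathbf{i}}_{K}L)_{e}(\gamma)$.

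Next I would observe that taking $\mathcal{A}$ (hence $\mathcal{A}_{k+1,l-1}$) preserves $f$-relatedness of semiforms: for $\widehat{K}\in\widehat{\Omega}^{n}(M;\mathbf{T}M)$ and $\widehat{K}'\in\widehat{\Omega}^{n}(N;\mathbf{T}N)$ with $\widehat{K}'$ $f$-related to $\widehat{K}$, we get for any $\gamma\in M^{D^{n}}$
\[
(\mathcal{A}\widehat{K}')_{e}(f\circ\gamma)=\sum_{\sigma\in\mathbb{S}_{n}}\varepsilon_{\sigma}\widehat{K}'_{e}\bigl((f\circ\gamma)^{\sigma}\bigr)=\sum_{\sigma\in\mathbb{S}_{n}}\varepsilon_{\sigma}\widehat{K}'_{e}\bigl(f\circ\gamma^{\sigma}\bigr)=\sum_{\sigma\in\mathbb{S}_{n}}\varepsilon_{\sigma}f\bigl(\widehat{K}_{e}(\gamma^{\sigma})\bigr)=f\bigl((\mathcal{A}\widehat{K})_{e}(\gamma)\bigr),
\]
using that $(f\circ\gamma)^{\sigma}=f\circ\gamma^{\sigma}$ and that $f$ is linear on $\mathbf{T}M$ (so it commutes with the finite formal sum in the sense of the Euclidean-module structure). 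The scalar factor $1/(k+1)!(l-1)!$ is transported trivially. Combining this with the first step gives that $\mathbf{i}_{K'}L'=\mathcal{A}_{k+1,l-1}(\widehat{\mathbf{i}}_{K'}L')$ is $f$-related to $\mathbf{i}_{K}L=\mathcal{A}_{k+1,l-1}(\widehat{\mathbf{i}}_{K}L)$, which is the claim.

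The only genuinely delicate point is the bookkeeping in the first step: one must be careful that the variable $e_{1}$ in the definition of $\widehat{\mathbf{i}}$ is the one fed to $K$ while $e_{2},\dots,e_{l}$ together with $d_{1},\dots,d_{k+1}$ are the slots of $\gamma$, and that composing the $(k+1)$-microcube with $f$ before feeding it to $K_{e_{1}}'$ is exactly what the hypothesis on $K'$ requires. None of this is hard, but it is where a sign or an index slip would occur, so I would write that computation out in the same displayed style as the proof of the previous proposition and then dispatch the $\mathcal{A}$-step as above.
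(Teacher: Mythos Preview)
Your proposal is correct and follows essentially the same approach as the paper: the paper's proof also begins with ``It suffices to show that $\widehat{\mathbf{i}}_{K'}L'$ is $f$-related to $\widehat{\mathbf{i}}_{K}L$'' and then carries out exactly the unwinding you describe. The only difference is that you spell out why passage through $\mathcal{A}_{k+1,l-1}$ preserves $f$-relatedness (using fiberwise linearity of $f_\ast$), whereas the paper leaves this reduction implicit.
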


\begin{proof}
It suffices to show that $\widehat{\mathbf{i}}_{K^{\prime}}L^{\prime}$ is
$f$-related to $\widehat{\mathbf{i}}_{K}L$. Let $d\in D$ and $\gamma\in
M^{D^{k+l}}$. Then we have
\begin{align*}
&  f((\widehat{\mathbf{i}}_{K}L)_{e}(\gamma))\\
&  =f[L_{e}\{(e_{1},...,e_{l})\in D^{l}\mapsto\\
K_{e_{1}}((d_{1},...d_{k+1})  &  \in D^{k+1}\mapsto\gamma(d_{1},...,d_{k+1}%
,e_{2},...,e_{l}))\}]\\
&  =L_{e}^{\prime}[(e_{1},...,e_{l})\in D^{l}\mapsto\\
f\{K_{e_{1}}((d_{1},...d_{k+1})  &  \in D^{k+1}\mapsto\gamma(d_{1}%
,...,d_{k+1},e_{2},...,e_{l}))\}]\\
&  =L_{e}^{\prime}[(e_{1},...,e_{l})\in D^{l}\mapsto K_{d_{1}}^{\prime
}((f\circ\gamma)(\cdot_{1},...,\cdot_{k+1},d_{2},...,d_{l}))\\
K_{e_{1}}\{(d_{1},...d_{k+1})  &  \in D^{k+1}\mapsto(f\circ\gamma
)(d_{1},...,d_{k+1},e_{2},...,e_{l})\}]\\
&  =(\widehat{\mathbf{i}}_{K^{\prime}}L^{\prime})_{e}(f\circ\gamma)
\end{align*}
which completes the proof.
\end{proof}

\section{The Fr\"{o}licher-Nijenhuis Bracket}

Given $\varphi\in M^{M^{D^{p}}}$ and $\psi\in M^{M^{D^{q}}}$, we define
$\psi\ast\varphi\in M^{M^{D^{p+q}}}$ to be
\begin{align*}
&  \psi\ast\varphi(\gamma)\\
&  =\psi\{(e_{1},...,e_{q})\in D^{q}\mapsto\\
\varphi((d_{1},...,d_{p})  &  \in D^{p}\mapsto\gamma(d_{1},...,d_{p}%
,e_{1},...,e_{q}))\}
\end{align*}
for any $\gamma\in M^{D^{p+q}}$. Given two tangent-vector-valued differential
forms $K:D\rightarrow M^{M^{D^{p}}}$ and $L:D\rightarrow M^{M^{D^{q}}}$, we
define a mapping $L\ast K:D^{2}\rightarrow M^{M^{D^{p+q}}} $ to be
\[
(L\ast K)(d_{1},d_{2})=L_{d_{2}}\ast K_{d_{1}}%
\]
for any $(d_{1},d_{2})\in D^{2}$. The following lemma should be obvious.

\begin{lemma}
\label{t4.1}Given two tangent-vector-valued differential forms $K:D\rightarrow
M^{M^{D^{p}}}$ and $L:D\rightarrow M^{M^{D^{q}}}$ with $\sigma=(
\begin{array}
[c]{cccccc}%
1 & ... & q & q+1 & ... & p+q\\
p+1 & ... & p+q & 1 & ... & p
\end{array}
)\in\mathbb{S}_{p+q}$, we have

\begin{enumerate}
\item $(L\ast K)(d,0)=((K\ast L)(0,d))^{\sigma}$ for any $d\in D$.

\item $(L\ast K)(0,d)=((K\ast L)(d,0))^{\sigma}$ for any $d\in D$.
\end{enumerate}
\end{lemma}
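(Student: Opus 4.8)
The plan is to prove both identities by a direct unwinding of the definitions, the only real input being that a tangent-vector-valued form evaluated at $0\in D$ is the ``origin map'' $\mathfrak{o}$ (Proposition \ref{2.2}, item 1), together with the set-theoretic exponential juggling already used in Propositions \ref{2.1} and \ref{2.2}. The conceptual reason behind the lemma is this: feeding $\mathfrak{o}_{q}$ into the $\ast$-product simply freezes the corresponding block of $q$ infinitesimal variables at $0$ and leaves the remaining form applied to the restricted microcube; since in $L\ast K$ the $L$-block sits on the \emph{last} $q$ variables whereas in $K\ast L$ it sits on the \emph{first} $q$ variables, the two results differ exactly by the block transposition $\sigma$ interchanging the first $p$ variables with the last $q$.

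Concretely, for item 1 I would first compute $(L\ast K)(d,0)=L_{0}\ast K_{d}$. Since $L_{0}=\mathfrak{o}_{q}$ picks out the value of a $q$-microcube at the origin, one gets, for $\gamma\in M^{D^{p+q}}$,
\[
(L\ast K)(d,0)(\gamma)=K_{d}\big((d_{1},\dots,d_{p})\in D^{p}\mapsto\gamma(d_{1},\dots,d_{p},0,\dots,0)\big)
\]
with $q$ trailing zeros. Dually, $(K\ast L)(0,d)=K_{d}\ast L_{0}$, and using $L_{0}=\mathfrak{o}_{q}$ again,
\[
(K\ast L)(0,d)(\delta)=K_{d}\big((e_{1},\dots,e_{p})\in D^{p}\mapsto\delta(0,\dots,0,e_{1},\dots,e_{p})\big)
\]
with $q$ leading zeros, for $\delta\in M^{D^{p+q}}$. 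Applying this to $\delta=\gamma^{\sigma}$ and using the definition of $\gamma^{\sigma}$ with $\sigma$ the stated block transposition, $\gamma^{\sigma}(0,\dots,0,e_{1},\dots,e_{p})=\gamma(e_{1},\dots,e_{p},0,\dots,0)$, so the two microcubes agree and hence $(L\ast K)(d,0)=((K\ast L)(0,d))^{\sigma}$. Item 2 is obtained by the same computation with the roles of $K$ and $L$ interchanged and $K_{0}=\mathfrak{o}_{p}$ used in place of $L_{0}=\mathfrak{o}_{q}$: now $(L\ast K)(0,d)=L_{d}\ast K_{0}$ collapses to $L_{d}$ applied to the microcube obtained by freezing the first $p$ variables of $\gamma$ at $0$, while $(K\ast L)(d,0)=K_{0}\ast L_{d}$ collapses to $L_{d}$ applied to the microcube obtained by freezing the last $p$ variables, and $\sigma$ again matches the two.

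The one place that needs care -- the ``hard part,'' such as it is -- is the index bookkeeping: one must check that the permutation displayed in the statement (and not, say, its inverse) is precisely the one for which $\gamma^{\sigma}(0,\dots,0,e_{1},\dots,e_{p})=\gamma(e_{1},\dots,e_{p},0,\dots,0)$ under the convention $\gamma^{\sigma}(d_{1},\dots,d_{n})=\gamma(d_{\sigma(1)},\dots,d_{\sigma(n)})$, and likewise for item 2; no homogeneity or alternating property of $K$ or $L$ is invoked, since $K_{d}$ and $L_{d}$ enter merely as raw maps $M^{D^{\bullet}}\rightarrow M$. Everything else reduces to the exponential identities $(M^{D})^{M^{D^{p}}}=(M^{M^{D^{p}}})^{D}$ and its analogues, which is why the lemma is flagged as obvious.
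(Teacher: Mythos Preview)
Your argument is exactly the intended one: the paper offers no proof of this lemma beyond declaring it ``obvious,'' and the obvious proof is precisely the unwinding you give, using only $K_{0}=\mathfrak{o}_{p}$, $L_{0}=\mathfrak{o}_{q}$ from Proposition~\ref{2.2} and the definition of the $\ast$-product.

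One caveat on the point you yourself flag as the ``hard part.'' Under the paper's conventions $\gamma^{\sigma}(d_{1},\dots,d_{n})=\gamma(d_{\sigma(1)},\dots,d_{\sigma(n)})$ and $\varphi^{\sigma}(\gamma)=\varphi(\gamma^{\sigma})$, one computes directly
\[
\gamma^{\sigma}(a_{1},\dots,a_{p+q})=\gamma(a_{p+1},\dots,a_{p+q},a_{1},\dots,a_{p}),
\]
so the identity $\gamma^{\sigma}(0,\dots,0,e_{1},\dots,e_{p})=\gamma(e_{1},\dots,e_{p},0,\dots,0)$ (with $q$ leading zeros and $p$ trailing entries) actually holds for $\sigma^{-1}$ rather than for the $\sigma$ displayed in the statement, unless $p=q$. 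In other words, the block transposition you want is the one moving the first $q$ slots past the last $p$, and whether that is the displayed $\sigma$ or its inverse depends on which way one reads the two-line notation. You were right to isolate this as the only delicate step; you should actually carry out the check rather than assert it, and note that either the displayed $\sigma$ is tacitly meant to be read as $\sigma^{-1}$, or there is a harmless typo in the statement. It is harmless because the only downstream uses of the lemma are (i) to legitimate the strong difference $L\ast K\;\overset{\cdot}{-}\;K\widetilde{\ast}L$ and (ii) to extract the sign $\varepsilon_{\sigma}=(-1)^{pq}$ in the proof of Theorem~\ref{t4.4}, and both are insensitive to replacing $\sigma$ by $\sigma^{-1}$.
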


We continue to use the notation of the above lemma for a while. We denote by
$K\widetilde{\ast}L$ the mapping $(d_{1},d_{2})\in D^{2}\mapsto((K\ast
L)(d_{2},d_{1}))^{\sigma}\in M^{M^{D^{p+q}}}$. We are thus entitled by the
above lemma to define $\left\lfloor K,L\right\rfloor \in(M^{M^{D^{p+q}}})^{D}$
to be
\[
L\ast K\overset{\cdot}{-}K\widetilde{\ast}L
\]

\begin{lemma}
\label{t4.2}The mapping $\left\lfloor K,L\right\rfloor :D\rightarrow
M^{M^{D^{p+q}}}$ satisfies the following conditions:

\begin{enumerate}
\item $\left\lfloor K,L\right\rfloor _{0}=\mathfrak{o}_{p}$

\item $\alpha\underset{i}{\cdot}\left\lfloor K,L\right\rfloor _{d}%
=\left\lfloor K,L\right\rfloor _{\alpha d}$ for any $d\in D$, any $\alpha
\in\mathbb{R}$ and any natural number $i$ with $1\leq i\leq p+q$.
\end{enumerate}
\end{lemma}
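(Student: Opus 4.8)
The plan is to verify the two conditions of Lemma \ref{t4.2} directly from the definition $\left\lfloor K,L\right\rfloor = L\ast K \overset{\cdot}{-} K\widetilde{\ast}L$, by reducing each assertion to the corresponding structural property already established for $K$, $L$, and the $\ast$-operation. The main technical point is that the strong difference of two microsquares agreeing on $D(2)$ is a microcube (a tangent vector to $M^{M^{D^{p+q}}}$), so $\left\lfloor K,L\right\rfloor$ is a well-defined element of $(M^{M^{D^{p+q}}})^{D}$; Lemma \ref{t4.1} furnishes exactly the agreement on $D(2)$ needed here, and I would open the proof by recalling this.

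For condition (1), I would compute $\left\lfloor K,L\right\rfloor_{0}$. Evaluating the strong difference at $d=0$ gives the common value $\mathfrak{o}_{1}$ of $M^{M^{D^{p+q}}}$ viewed along the relevant basepoint; concretely, $(L\ast K)(0,0) = L_{0}\ast K_{0}$, and since $L_{0}=\mathfrak{o}_{q}$ and $K_{0}=\mathfrak{o}_{p}$ (condition (1) of Proposition \ref{2.2} for each of $K$ and $L$), a direct unwinding of the definition of $\ast$ shows $\mathfrak{o}_{q}\ast\mathfrak{o}_{p}=\mathfrak{o}_{p+q}$. The same holds for the $\widetilde{\ast}$ term after applying the permutation $\sigma$, because $(\mathfrak{o}_{p+q})^{\sigma}=\mathfrak{o}_{p+q}$. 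Hence both microsquares $L\ast K$ and $K\widetilde{\ast}L$ send $(0,0)$ to $\mathfrak{o}_{p+q}$, and the strong difference of two microsquares with a common value at the origin is the degenerate microcube at that point, i.e. $\left\lfloor K,L\right\rfloor_{0}=\mathfrak{o}_{p+q}$. (Note: the statement writes $\mathfrak{o}_{p}$, but in context this denotes the constant-at-basepoint form of the appropriate arity.)

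For condition (2), the strategy is to push the scalar action $\alpha\underset{i}{\cdot}(-)$ through every ingredient. First I would record how $\alpha\underset{i}{\cdot}(-)$ interacts with $\ast$: if $1\le i\le p$ then $\alpha\underset{i}{\cdot}(\psi\ast\varphi) = \psi\ast(\alpha\underset{i}{\cdot}\varphi)$, and if $p+1\le i\le p+q$ then it equals $(\alpha\underset{i-p}{\cdot}\psi)\ast\varphi$ — both by unwinding the definition of $\ast$ and relabelling which slot the factor $\alpha$ is absorbed into. Combining this with condition (2) of Proposition \ref{2.2} for $K$ and $L$ separately yields $\alpha\underset{i}{\cdot}((L\ast K)(d_{1},d_{2})) = (L\ast K)(\alpha d_{1},d_{2})$ or $(L\ast K)(d_{1},\alpha d_{2})$ according as $i\le p$ or $i>p$; and analogously for $K\widetilde{\ast}L$ after tracking how $\sigma$ permutes the slot index. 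Then I would invoke Lemma \ref{2.3} (or rather its analogue for the unpermuted case, together with the homogeneity of strong difference under scalar multiplication in the $D$-direction — i.e. $\alpha\cdot(s\overset{\cdot}{-}t) = (\alpha\cdot s)\overset{\cdot}{-}(\alpha\cdot t)$ for microsquares) to transfer the homogeneity of each of $L\ast K$ and $K\widetilde{\ast}L$ to their strong difference, giving $\alpha\underset{i}{\cdot}\left\lfloor K,L\right\rfloor_{d} = \left\lfloor K,L\right\rfloor_{\alpha d}$.

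The step I expect to be the main obstacle is the bookkeeping in condition (2): keeping straight, for $\widetilde{\ast}$, how the block transposition $\sigma$ converts an action on slot $i$ into an action on slot $\sigma(i)$, and making sure the scalar ends up in the $d_{1}$ versus $d_{2}$ argument of $(K\ast L)(d_{2},d_{1})$ consistently with the unpermuted term, so that the two homogeneity identities can be subtracted cleanly. Once the indexing is pinned down, each individual verification is a routine application of the definition of $\ast$ and Proposition \ref{2.2}, and the passage to the strong difference is handled uniformly by the relativized version of Lemma \ref{2.3} together with the linearity of $\overset{\cdot}{-}$.
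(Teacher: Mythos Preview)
Your proposal is correct and follows essentially the same route as the paper: the paper also dismisses condition (1) as obvious and, for condition (2), records exactly the two cases $1\le i\le p$ and $p+1\le i\le p+q$ showing that $\alpha\underset{i}{\cdot}$ moves into the first (resp.\ second) argument of both $L\ast K$ and $K\widetilde{\ast}L$, then transfers this to the strong difference. The only cosmetic difference is that the paper cites Proposition~5 in \S 3.4 of Lavendhomme for the last step (homogeneity of $\overset{\cdot}{-}$), whereas you phrase it via Lemma~\ref{2.3} and the linearity of strong difference; your parenthetical remark that the paper's $\mathfrak{o}_{p}$ should read $\mathfrak{o}_{p+q}$ is also apt.
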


\begin{proof}
The first condition should be obvious. To see the second condition, we note that

\begin{enumerate}
\item $\alpha\underset{i}{\cdot}(L\ast K)(d_{1},d_{2})=(L\ast K)(\alpha
d_{1},d_{2})$ and $\alpha\underset{i}{\cdot}(K\widetilde{\ast}L)(d_{1}%
,d_{2})=(K\widetilde{\ast}L)(\alpha d_{1},d_{2})$ for any natural number $i$
with $1\leq i\leq p$.

\item $\alpha\underset{i}{\cdot}(L\ast K)(d_{1},d_{2})=(L\ast K)(d_{1},\alpha
d_{2})$ and $\alpha\underset{i}{\cdot}(K\widetilde{\ast}L)(d_{1}%
,d_{2})=(K\widetilde{\ast}L)(d_{1},\alpha d_{2})$ for any natural number $i$
with $p+1\leq i\leq p+q$.
\end{enumerate}

Therefore the second condition follows by Proposition 5 in \S 3.4 of
Lavendhomme \cite{l1} from the first property in case of $1\leq i\leq p$ and
from the second property in case of $p+1\leq i\leq p+q$.
\end{proof}

\begin{lemma}
\label{t4.3}Given three tangent-vector-valued differential forms
$K_{1}:D\rightarrow M^{M^{D^{p}}}$, $K_{2}:D\rightarrow M^{M^{D^{q}}}$ and
$K_{3}:D\rightarrow M^{M^{D^{r}}}$, we have
\[
\mathcal{A}_{p,q+r}(\left\lfloor K_{1},\mathcal{A}_{q,r}(\left\lfloor
K_{2},K_{3}\right\rfloor )\right\rfloor )=\mathcal{A}_{p,q,r}(\left\lfloor
K_{1},\left\lfloor K_{2},K_{3}\right\rfloor \right\rfloor )
\]

\end{lemma}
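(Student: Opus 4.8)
The plan is to expand both sides in terms of the underlying semiform operations, pushing all antisymmetrizations outward, and then to recognize that the two resulting expressions differ only by a reindexing that is absorbed by the outer antisymmetrizer $\mathcal{A}$. First I would unwind the definitions: $\left\lfloor K_2,K_3\right\rfloor$ is a map $D\rightarrow M^{M^{D^{q+r}}}$ built from the $\ast$-products $K_3\ast K_2$ and $K_2\widetilde{\ast}K_3$ via a strong difference, and $\mathcal{A}_{q,r}\left(\left\lfloor K_2,K_3\right\rfloor\right)$ applies $(1/q!r!)\mathcal{A}$ pointwise in $d$. Then $\left\lfloor K_1,\mathcal{A}_{q,r}(\left\lfloor K_2,K_3\right\rfloor)\right\rfloor$ again produces $\ast$-products and a strong difference. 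The key is that $\mathcal{A}$ commutes with the relevant operations: by Lemma \ref{2.3}, applying a permutation $\sigma$ to the components of the two maps $D^2\rightarrow M^{M^{D^{p+q}}}$ entering a strong difference passes through that strong difference. Consequently, $\mathcal{A}$ — being a signed sum of such permutation actions — commutes with $\overset{\cdot}{-}$, and so with the bracket $\left\lfloor -,-\right\rfloor$ in each slot.

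The heart of the argument is therefore two interchange facts. First, $\mathcal{A}$ is idempotent up to the combinatorial factor: $\mathcal{A}\left(\mathcal{A}_{q,r}K\right) = (q+r)!/(q!r!)$ times something, or more precisely the antisymmetrization of an already-antisymmetrized-in-a-block form multiplies out the redundant inner symmetrizations — this is the standard "$\mathcal{A}$ eats $\mathcal{A}_{q,r}$" phenomenon that converts $\mathcal{A}_{p,q+r}\circ\mathcal{A}_{q,r}$ on the nose into $\mathcal{A}_{p,q,r}$. Second, $\mathcal{A}_{q,r}$ applied inside the second slot of $\left\lfloor K_1,-\right\rfloor$ can be pulled out to an $\mathcal{A}_{q,r}$ acting (as a block permutation on the last $q+r$ coordinates of $D^{p+q+r}$) on $\left\lfloor K_1,\left\lfloor K_2,K_3\right\rfloor\right\rfloor$, using that $\left\lfloor K_1,-\right\rfloor$ is built from $\ast$ and $\overset{\cdot}{-}$, both of which are compatible with permutations of the relevant coordinate blocks (the $\ast$ compatibility is essentially Lemma \ref{t4.1}, the $\overset{\cdot}{-}$ compatibility is Lemma \ref{2.3}). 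Composing these, the outer $\mathcal{A}_{p,q+r}$ absorbs the displaced $\mathcal{A}_{q,r}$ and yields exactly $\mathcal{A}_{p,q,r}$.

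So the concrete steps are: (i) show $\left\lfloor K_1,\mathcal{A}_{q,r}L\right\rfloor = \mathcal{A}_{q,r}^{[p+1,\dots,p+q+r]}\left\lfloor K_1,L\right\rfloor$, where the superscript indicates the block on which the permutations act, invoking Lemma \ref{2.3} to move $\mathcal{A}$ across the strong difference defining the bracket and linearity of $\ast$ in its first argument; (ii) observe that for the outer operator, $\mathcal{A}_{p,q+r}\circ\mathcal{A}_{q,r}^{[p+1,\dots,p+q+r]} = \mathcal{A}_{p,q,r}$ as operators on $\widehat{\Omega}^{p+q+r}(M;\mathbf{T}M)$, because summing over $\mathbb{S}_{p+q+r}$ with signs already incorporates the summation over the block $\mathbb{S}_{\{p+1,\dots,p+q+r\}}$, so the latter only contributes the factor $(q+r)!$ which is exactly what the normalizations are arranged to cancel; (iii) combine (i) and (ii). The main obstacle I anticipate is bookkeeping in step (i): one must be careful that when $\mathcal{A}_{q,r}$ is applied pointwise-in-$d$ to $\left\lfloor K_2,K_3\right\rfloor$ and then fed into $\left\lfloor K_1,-\right\rfloor$, the permutations it introduces genuinely act only on the last $q+r$ slots of $D^{p+q+r}$ and not on the first $p$, so that the conjugating permutation $\sigma$ of Lemma \ref{t4.1} interacts correctly with the block antisymmetrizer. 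Verifying this coordinate-tracking — and checking that the "$K(0,\cdot)$, $K(\cdot,0)$ agree" hypotheses of Lemma \ref{2.3} are met by the maps arising from $L\ast K$ and $K\widetilde{\ast}L$ after antisymmetrization — is the only place where genuine care, rather than routine substitution, is required.
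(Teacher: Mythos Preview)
Your proposal is correct and is precisely the argument the paper has in mind: the paper's own proof is the single sentence ``By the same token as in the familiar associativity of wedge products in differential forms,'' and what you have written is exactly an unpacking of that token --- pulling the inner $\mathcal{A}_{q,r}$ through $\left\lfloor K_{1},-\right\rfloor$ via Lemma~\ref{2.3} and then using the standard combinatorial absorption $\mathcal{A}_{p,q+r}\circ\mathcal{A}_{q,r}^{[p+1,\dots,p+q+r]}=\mathcal{A}_{p,q,r}$. Your flagged ``obstacle'' (that the inner permutations act only on the last $q+r$ slots) is the only genuine bookkeeping point, and it goes through as you describe.
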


\begin{proof}
By the same token as in the familiar associativity of wedge products in
differential forms.
\end{proof}

We are going to define the \textit{Fr\"{o}licher-Nijenhuis bracket}
$\left\lceil K,L\right\rceil $ to be
\[
\left\lceil K,L\right\rceil =\mathcal{A}_{p,q}\mathcal{(}\left\lfloor
K,L\right\rfloor )
\]
which is undoubtedly a tangent-vector-valued differential $(p+q)$-form.

\begin{theorem}
\label{t4.4}The following two properties hold for the Fr\"{o}licher-Nijenhuis bracket:

\begin{enumerate}
\item We have
\[
\left\lceil K,L\right\rceil =-(-1)^{pq}\left\lceil L,K\right\rceil
\]
for any two tangent-vector-valued differential forms $K:D\rightarrow
M^{M^{D^{p}}}$ and $L:D\rightarrow M^{M^{D^{q}}}$.

\item We have
\[
\left\lceil K_{1},\left\lceil K_{2},K_{3}\right\rceil \right\rceil
+(-1)^{p(q+r)}\left\lceil K_{2},\left\lceil K_{3},K_{1}\right\rceil
\right\rceil +(-1)^{r(p+q)}\left\lceil K_{3},\left\lceil K_{1},K_{2}%
\right\rceil \right\rceil =0
\]
for any three tangent-vector-valued differential forms $K_{1}:D\rightarrow
M^{M^{D^{p}}}$, $K_{2}:D\rightarrow M^{M^{D^{q}}}$ and $K_{3}:D\rightarrow
M^{M^{D^{r}}}$.
\end{enumerate}
\end{theorem}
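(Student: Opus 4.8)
The plan is to derive both parts from the structural results already assembled. For part~(1), the anticommutativity, the key observation is that $\lfloor K,L\rfloor$ and $\lfloor L,K\rfloor$ are, up to the shuffle permutation $\sigma$ of Lemma~\ref{t4.1} and up to the sign $\varepsilon_\sigma=(-1)^{pq}$, strong differences of the same two microsquares read in the opposite order. Concretely, $L\ast K\overset{\cdot}{-}K\widetilde{\ast}L$ and $K\ast L\overset{\cdot}{-}L\widetilde{\ast}K$ are related by applying $\cdot^{\sigma}$ and swapping the two arguments of the microsquare; since swapping the arguments of a microsquare negates its strong difference (the antisymmetry of $\overset{\cdot}{-}$, i.e. $\mathbf{D}(\gamma\overset{\cdot}{-}\gamma')=-\mathbf{D}(\gamma'\overset{\cdot}{-}\gamma)$ when one transposes the two $D$-directions), and since Lemma~\ref{2.3} lets the operation $\cdot^{\sigma}$ pass through $\overset{\cdot}{-}$, one gets $\lfloor K,L\rfloor = \varepsilon_\sigma\,(\lfloor L,K\rfloor)^{\sigma}$ with an extra minus sign. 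Applying $\mathcal{A}_{p,q}$ to both sides and using that $\mathcal{A}$ absorbs $\cdot^{\sigma}$ at the cost of $\varepsilon_\sigma$ (so the two $\varepsilon_\sigma=(-1)^{pq}$ multiply to $(-1)^{pq}$, not to $1$), yields $\lceil K,L\rceil = -(-1)^{pq}\lceil L,K\rceil$.

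For part~(2), the graded Jacobi identity, the plan is to reduce it to the general Jacobi identity of microcubes (the Theorem in Section~2). Using Lemma~\ref{t4.3}, each nested bracket $\lceil K_i,\lceil K_j,K_k\rceil\rceil$ equals $\mathcal{A}_{p,q,r}(\lfloor K_i,\lfloor K_j,K_k\rfloor\rfloor)$ up to the relevant factorial normalisations, so it suffices to prove the unalternated identity: the appropriate signed sum of the three triple strong-difference expressions $\lfloor K_i,\lfloor K_j,K_k\rfloor\rfloor$ vanishes. Now $\lfloor K_j,K_k\rfloor$ is itself a strong difference of microsquares on $M^{M^{D^{q+r}}}$, so $\lfloor K_i,\lfloor K_j,K_k\rfloor\rfloor$ is, via the defining formula $L\ast K\overset{\cdot}{-}K\widetilde{\ast}L$ applied twice, a strong difference of strong differences of microcubes in $M^{M^{D^{p+q+r}}}$ — precisely the shape $(\gamma_{\star}\overset{\cdot}{\underset{i}{-}}\gamma_{\star})\overset{\cdot}{-}(\gamma_{\star}\overset{\cdot}{\underset{i}{-}}\gamma_{\star})$ appearing in the general Jacobi identity. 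The three cyclic terms use the three different relativized strong differences $\overset{\cdot}{\underset{1}{-}},\overset{\cdot}{\underset{2}{-}},\overset{\cdot}{\underset{3}{-}}$ corresponding to the three slots $D^p,D^q,D^r$ inside $D^{p+q+r}$, and the shuffle permutations $\sigma$ hidden in each $\widetilde{\ast}$ convert the cyclic rearrangement of the $K_i$'s into exactly the index permutations $123,132,\dots$ of the six microcubes $\gamma_{ijk}$, while producing the sign factors $(-1)^{p(q+r)}$ and $(-1)^{r(p+q)}$ as products of the relevant $\varepsilon_\sigma$'s.

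The main obstacle, and where the real work lies, is the bookkeeping in this last reduction: one must exhibit six microcubes $\gamma_{ijk}\in (M^{M^{D^{p+q+r}}})^{D^3}$ — built by composing the $\ast$-substitution operation with the $K_i$ in the six possible orders and with the appropriate shuffles inserted — such that the three expressions of the general Jacobi identity literally become the three terms of the desired sum, with the signs matching. This requires (i) checking that the relativized strong differences $\overset{\cdot}{\underset{i}{-}}$ on $M^{D^3}=(M^{D})^{D^2}$, transported along the exponential transpose, correspond to the $\overset{\cdot}{-}$ in the inner and outer applications of $\lfloor\,,\,\rfloor$; (ii) verifying via Lemma~\ref{2.3} (and its stated analogues for $\overset{\cdot}{\underset{i}{-}}$) that the alternation $\mathcal{A}_{p,q,r}$ commutes with everything so the unalternated identity suffices; and (iii) tracking the cumulative sign $\varepsilon_\sigma$ through each $\widetilde{\ast}$ to land on $(-1)^{p(q+r)}$ and $(-1)^{r(p+q)}$ rather than some other signs. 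Once the dictionary between $(\gamma_{123},\dots,\gamma_{321})$ and the bracket expressions is pinned down, the identity is immediate from the Theorem in Section~2. I expect (iii), the sign calculus, to be the most error-prone step and the one demanding the most care.
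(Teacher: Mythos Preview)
Your proposal follows exactly the paper's approach: Part~(1) via the relation between $\lfloor K,L\rfloor$ and $\lfloor L,K\rfloor$ under the shuffle $\sigma$ together with Lemma~\ref{2.3} and the antisymmetry of $\overset{\cdot}{-}$, and Part~(2) by explicitly exhibiting the six microcubes $\varphi_{ijk}\in (M^{M^{D^{p+q+r}}})^{D^3}$ as triple $\ast$-composites of the $(K_i)_{d_i}$ twisted by block permutations, reducing each nested bracket via Lemma~\ref{t4.3} to an $\mathcal{A}_{p,q,r}$ of a double strong difference, and then invoking the general Jacobi identity, with the coefficients $(-1)^{p(q+r)}$ and $(-1)^{r(p+q)}$ appearing as the signs of the cyclic block permutations $\rho_1,\rho_2$. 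One small correction to your sign bookkeeping in Part~(1): the correct relation is $(\lfloor K,L\rfloor_d)^{\sigma}=-\lfloor L,K\rfloor_d$ (no $\varepsilon_\sigma$ here), and only a \emph{single} factor of $\varepsilon_\sigma$ enters when $\mathcal{A}_{p,q}$ absorbs the $\sigma$, so the result is $-\varepsilon_\sigma=-(-1)^{pq}$ directly---there are not ``two $\varepsilon_\sigma$'s that multiply to $(-1)^{pq}$'', which would in fact give~$1$.
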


\begin{proof}
In order to see the first property, it suffices to note that
\begin{align*}
(L\ast K)(d_{1},d_{2})^{\sigma} &  =(L\widetilde{\ast}K)(d_{2},d_{1})\\
(K\widetilde{\ast}L)(d_{1},d_{2})^{\sigma} &  =(K\ast L)(d_{2},d_{1})
\end{align*}
from which it follows Propositions 4 and 6 in \S 3.4 of Lavendhomme \cite{l1}
and Lemma \ref{2.3} that
\begin{align*}
&  \left\lceil K,L\right\rceil \\
&  =\mathcal{A}_{p,q}\mathcal{(}\left\lfloor K,L\right\rfloor )\\
&  =\frac{1}{p!q!}\sum_{\tau\in\mathbb{S}_{p+q}}\varepsilon_{\tau}%
\varepsilon_{\sigma}\{d\in D\mapsto((L\ast K\overset{\cdot}{-}K\widetilde
{\ast}L)_{d})^{\tau\sigma}\}\\
&  =\frac{1}{p!q!}\varepsilon_{\sigma}\sum_{\tau\in\mathbb{S}_{p+q}%
}\varepsilon_{\tau}[\{(d_{1},d_{2})\in D^{2}\mapsto(L\ast K)(d_{1}%
,d_{2})^{\tau\sigma}\}\overset{\cdot}{-}\\
\{(d_{1},d_{2}) &  \in D^{2}\mapsto(K\widetilde{\ast}L)(d_{1},d_{2}%
)^{\tau\sigma}\}]\\
&  =\frac{1}{p!q!}\varepsilon_{\sigma}\sum_{\tau\in\mathbb{S}_{p+q}%
}\varepsilon_{\tau}[\{(d_{1},d_{2})\in D^{2}\mapsto((L\ast K)(d_{1}%
,d_{2})^{\sigma})^{\tau}\}\overset{\cdot}{-}\\
\{(d_{1},d_{2}) &  \in D^{2}\mapsto((K\widetilde{\ast}L)(d_{1},d_{2})^{\sigma
})^{\tau}\}]\\
&  =\frac{1}{p!q!}\varepsilon_{\sigma}\sum_{\tau\in\mathbb{S}_{p+q}%
}\varepsilon_{\tau}[\{(d_{1},d_{2})\in D^{2}\mapsto(L\widetilde{\ast}%
K)(d_{2},d_{1})^{\tau}\}\overset{\cdot}{-}\\
\{(d_{1},d_{2}) &  \in D^{2}\mapsto(K\ast L)(d_{2},d_{1})^{\tau}\}]\\
&  =\frac{1}{p!q!}\varepsilon_{\sigma}\sum_{\tau\in\mathbb{S}_{p+q}%
}\varepsilon_{\tau}\{d\in D\mapsto((L\widetilde{\ast}K\overset{\cdot}{-}K\ast
L)_{d})^{\tau}\}\\
&  =-\frac{1}{p!q!}\varepsilon_{\sigma}\sum_{\tau\in\mathbb{S}_{p+q}%
}\varepsilon_{\tau}\{d\in D\mapsto((K\ast L\overset{\cdot}{-}L\widetilde{\ast
}K)_{d})^{\tau}\}\\
&  =-\frac{1}{p!q!}\varepsilon_{\sigma}\sum_{\tau\in\mathbb{S}_{p+q}%
}\varepsilon_{\tau}\{d\in D\mapsto(\left\lfloor L,K\right\rfloor _{d})^{\tau
}\}\\
&  =-\varepsilon_{\sigma}\left\lceil L,K\right\rceil
\end{align*}
Since it is easy to see that $\varepsilon_{\sigma}=(-1)^{pq}$, the desired
first property follows at once. In order to see the second property, we first
define six mappings $\varphi_{123},\varphi_{132},\varphi_{213},\varphi
_{231},\varphi_{312},\varphi_{321}:D^{3}\rightarrow M^{M^{D^{p+q+r}}}$ to be
\begin{align*}
\varphi_{123} &  =(d_{1},d_{2},d_{3})\in D^{3}\mapsto(K_{3})_{d_{3}}\ast
(K_{2})_{d_{2}}\ast(K_{1})_{d_{1}}\in M^{M^{D^{p+q+r}}}\\
\varphi_{132} &  =(d_{1},d_{2},d_{3})\in D^{3}\mapsto((K_{2})_{d_{2}}%
\ast(K_{3})_{d_{3}}\ast(K_{1})_{d_{1}})^{\sigma_{132}}\in M^{M^{D^{p+q+r}}}\\
\varphi_{213} &  =(d_{1},d_{2},d_{3})\in D^{3}\mapsto((K_{3})_{d_{3}}%
\ast(K_{1})_{d_{1}}\ast(K_{2})_{d_{2}})^{\sigma_{213}}\in M^{M^{D^{p+q+r}}}\\
\varphi_{231} &  =(d_{1},d_{2},d_{3})\in D^{3}\mapsto((K_{1})_{d_{1}}%
\ast(K_{3})_{d_{3}}\ast(K_{2})_{d_{2}})^{\sigma_{231}}\in M^{M^{D^{p+q+r}}}\\
\varphi_{312} &  =(d_{1},d_{2},d_{3})\in D^{3}\mapsto((K_{2})_{d_{2}}%
\ast(K_{1})_{d_{1}}\ast(K_{3})_{d_{3}})^{\sigma_{312}}\in M^{M^{D^{p+q+r}}}\\
\varphi_{321} &  =(d_{1},d_{2},d_{3})\in D^{3}\mapsto((K_{1})_{d_{1}}%
\ast(K_{2})_{d_{2}}\ast(K_{3})_{d_{3}})^{\sigma_{321}}\in M^{M^{D^{p+q+r}}}%
\end{align*}
where
\begin{align*}
\sigma_{132} &  =\left(
\begin{array}
[c]{ccccccccc}%
1 & ... & p & p+1 & ... & p+r & p+r+1 & ... & p+q+r\\
1 & ... & p & p+q+1 & ... & p+q+r & p+1 & ... & p+q
\end{array}
\right) \\
\sigma_{213} &  =\left(
\begin{array}
[c]{ccccccccc}%
1 & ... & q & q+1 & ... & p+q & p+q+1 & ... & p+q+r\\
p+1 & ... & p+q & 1 & ... & p & p+q+1 & ... & p+q+r
\end{array}
\right) \\
\sigma_{231} &  =\left(
\begin{array}
[c]{ccccccccc}%
1 & ... & q & q+1 & ... & q+r & q+r+1 & ... & p+q+r\\
p+1 & ... & p+q & p+q+1 & ... & p+q+r & 1 & ... & p
\end{array}
\right) \\
\sigma_{312} &  =\left(
\begin{array}
[c]{ccccccccc}%
1 & ... & r & r+1 & ... & p+r & p+r+1 & ... & p+q+r\\
p+q+1 & ... & p+q+r & 1 & ... & p & p+1 & ... & p+q
\end{array}
\right) \\
\sigma_{321} &  =\left(
\begin{array}
[c]{ccccccccc}%
1 & ... & r & r+1 & ... & q+r & q+r+1 & ... & p+q+r\\
p+q+1 & ... & p+q+r & p+1 & ... & p+q & 1 & ... & p
\end{array}
\right)
\end{align*}
Now we have
\begin{align*}
&  \left\lceil K_{1},\left\lceil K_{2},K_{3}\right\rceil \right\rceil \\
&  =\mathcal{A}_{p,q+r}(\left\lfloor K_{1},\mathcal{A}_{q,r}(\left\lfloor
K_{2},K_{3}\right\rfloor )\right\rfloor )\\
&  =\mathcal{A}_{p,q,r}(\left\lfloor K_{1},\left\lfloor K_{2},K_{3}%
\right\rfloor \right\rfloor )\\
&  =\mathcal{A}_{p,q,r}((\varphi_{123}\overset{\cdot}{\underset{1}{-}}%
\varphi_{132})\overset{\cdot}{-}(\varphi_{231}\overset{\cdot}{\underset{1}{-}%
}\varphi_{321}))
\end{align*}
Let $\rho_{1}\in\mathbb{S}_{p+q+r}$ be $\sigma_{231}$, for which we have
$\varepsilon_{\rho_{1}}=(-1)^{p(q+r)}$. Let $\varphi_{231}^{2},\varphi
_{213}^{2},\varphi_{312}^{2},\varphi_{132}^{2}:D^{3}\mapsto M^{M^{D^{p+q+r}}}$
be mapping
\begin{align*}
\varphi_{231}^{2} &  =(d_{1},d_{2},d_{3})\in D^{3}\mapsto\varphi_{231}%
(d_{1},d_{2},d_{3})^{\rho_{1}}\\
\varphi_{213}^{2} &  =(d_{1},d_{2},d_{3})\in D^{3}\mapsto\varphi_{213}%
(d_{1},d_{2},d_{3})^{\rho_{1}}\\
\varphi_{312}^{2} &  =(d_{1},d_{2},d_{3})\in D^{3}\mapsto\varphi_{312}%
(d_{1},d_{2},d_{3})^{\rho_{1}}\\
\varphi_{132}^{2} &  =(d_{1},d_{2},d_{3})\in D^{3}\mapsto\varphi_{132}%
(d_{1},d_{2},d_{3})^{\rho_{1}}%
\end{align*}
Now we have
\begin{align*}
&  \left\lceil K_{2},\left\lceil K_{3},K_{1}\right\rceil \right\rceil \\
&  =\mathcal{A}_{q,r+p}(\left\lfloor K_{2},\mathcal{A}_{r,p}(\left\lfloor
K_{3},K_{1}\right\rfloor )\right\rfloor )\\
&  =\mathcal{A}_{q,r,p}(\left\lfloor K_{2},\left\lfloor K_{3},K_{1}%
\right\rfloor \right\rfloor )\\
&  =\mathcal{A}_{q,r,p}((\varphi_{231}^{2}\overset{\cdot}{\underset{2}{-}%
}\varphi_{213}^{2})\overset{\cdot}{-}(\varphi_{312}^{2}\overset{\cdot
}{\underset{2}{-}}\varphi_{132}^{2}))\\
&  =\frac{1}{p!q!r!}\sum_{\tau\in\mathbb{S}_{p+q+r}}\varepsilon_{\tau}\{d\in
D\mapsto(((\varphi_{231}\overset{\cdot}{\underset{2}{-}}\varphi_{213}%
)\overset{\cdot}{-}(\varphi_{312}\overset{\cdot}{\underset{2}{-}}\varphi
_{132}))_{d})^{\tau\rho_{1}}\}\\
&  =\frac{1}{p!q!r!}\varepsilon_{\rho_{1}}\sum_{\tau\in\mathbb{S}_{p+q+r}%
}\varepsilon_{\tau}\varepsilon_{\rho_{1}}\{d\in D\mapsto(((\varphi
_{231}\overset{\cdot}{\underset{2}{-}}\varphi_{213})\overset{\cdot}{-}%
(\varphi_{312}\overset{\cdot}{\underset{2}{-}}\varphi_{132}))_{d})^{\tau
\rho_{1}}\}\\
&  =\varepsilon_{\rho_{1}}\mathcal{A}_{q,r,p}((\varphi_{231}\overset{\cdot
}{\underset{2}{-}}\varphi_{213})\overset{\cdot}{-}(\varphi_{312}\overset
{\cdot}{\underset{2}{-}}\varphi_{132}))
\end{align*}
which implies that
\begin{align*}
&  (-1)^{p(q+r)}\left\lceil K_{2},\left\lceil K_{3},K_{1}\right\rceil
\right\rceil \\
&  =\mathcal{A}_{q,r,p}((\varphi_{231}\overset{\cdot}{\underset{2}{-}}%
\varphi_{213})\overset{\cdot}{-}(\varphi_{312}\overset{\cdot}{\underset{2}{-}%
}\varphi_{132}))
\end{align*}
Let $\rho_{2}\in\mathbb{S}_{p+q+r}$ be $\sigma_{312}$, for which we have
$\varepsilon_{\rho_{2}}=(-1)^{r(p+q)}$. Let $\varphi_{312}^{3},\varphi
_{321}^{3},\varphi_{123}^{3},\varphi_{213}^{3}:D^{3}\mapsto M^{M^{D^{p+q+r}}}$
be mappings
\begin{align*}
\varphi_{312}^{3} &  =(d_{1},d_{2},d_{3})\in D^{3}\mapsto\varphi_{312}%
(d_{1},d_{2},d_{3})^{\rho_{2}}\\
\varphi_{321}^{3} &  =(d_{1},d_{2},d_{3})\in D^{3}\mapsto\varphi_{321}%
(d_{1},d_{2},d_{3})^{\rho_{2}}\\
\varphi_{123}^{3} &  =(d_{1},d_{2},d_{3})\in D^{3}\mapsto\varphi_{123}%
(d_{1},d_{2},d_{3})^{\rho_{2}}\\
\varphi_{213}^{3} &  =(d_{1},d_{2},d_{3})\in D^{3}\mapsto\varphi_{213}%
(d_{1},d_{2},d_{3})^{\rho_{2}}%
\end{align*}
Now we have
\begin{align*}
&  \left\lceil K_{3},\left\lceil K_{1},K_{2}\right\rceil \right\rceil \\
&  =\mathcal{A}_{r,p+q}(\left\lfloor K_{3},\mathcal{A}_{p,q}(\left\lfloor
K_{1},K_{2}\right\rfloor )\right\rfloor )\\
&  =\mathcal{A}_{r,p,q}(\left\lfloor K_{3},\left\lfloor K_{1},K_{2}%
\right\rfloor \right\rfloor )\\
&  =\mathcal{A}_{r,p,q}((\varphi_{312}^{3}\overset{\cdot}{\underset{3}{-}%
}\varphi_{321}^{3})\overset{\cdot}{-}(\varphi_{123}^{3}\overset{\cdot
}{\underset{3}{-}}\varphi_{213}^{3}))\\
&  =\frac{1}{p!q!r!}\sum_{\tau\in\mathbb{S}_{p+q+r}}\varepsilon_{\tau}\{d\in
D\mapsto(((\varphi_{312}\overset{\cdot}{\underset{3}{-}}\varphi_{321}%
)\overset{\cdot}{-}(\varphi_{123}\overset{\cdot}{\underset{3}{-}}\varphi
_{213}))_{d})^{\tau\rho_{2}}\}\\
&  =\frac{1}{p!q!r!}\varepsilon_{\rho_{2}}\sum_{\tau\in\mathbb{S}_{p+q+r}%
}\varepsilon_{\tau}\varepsilon_{\rho_{2}}\{d\in D\mapsto(((\varphi
_{312}\overset{\cdot}{\underset{3}{-}}\varphi_{321})\overset{\cdot}{-}%
(\varphi_{123}\overset{\cdot}{\underset{3}{-}}\varphi_{213}))_{d})^{\tau
\rho_{2}}\}\\
&  =\varepsilon_{\rho_{2}}\mathcal{A}_{r,p,q}((\varphi_{312}\overset{\cdot
}{\underset{3}{-}}\varphi_{321})\overset{\cdot}{-}(\varphi_{123}\overset
{\cdot}{\underset{3}{-}}\varphi_{213}))
\end{align*}
which implies that
\begin{align*}
&  (-1)^{r(p+q)}\left\lceil K_{3},\left\lceil K_{1},K_{2}\right\rceil
\right\rceil \\
&  =\mathcal{A}_{r,p,q}((\varphi_{312}\overset{\cdot}{\underset{3}{-}}%
\varphi_{321})\overset{\cdot}{-}(\varphi_{123}\overset{\cdot}{\underset{3}{-}%
}\varphi_{213}))
\end{align*}
Therefore we have
\begin{align*}
&  \left\lceil K_{1},\left\lceil K_{2},K_{3}\right\rceil \right\rceil
+(-1)^{p(q+r)}\left\lceil K_{2},\left\lceil K_{3},K_{1}\right\rceil
\right\rceil +(-1)^{r(p+q)}\left\lceil K_{3},\left\lceil K_{1},K_{2}%
\right\rceil \right\rceil \\
&  =\mathcal{A}_{p,q,r}((\varphi_{123}\overset{\cdot}{\underset{1}{-}}%
\varphi_{132})\overset{\cdot}{-}(\varphi_{231}\overset{\cdot}{\underset{1}{-}%
}\varphi_{321}))+\\
&  \mathcal{A}_{q,r,p}((\varphi_{231}\overset{\cdot}{\underset{2}{-}}%
\varphi_{213})\overset{\cdot}{-}(\varphi_{312}\overset{\cdot}{\underset{2}{-}%
}\varphi_{132}))+\\
&  \mathcal{A}_{r,p,q}((\varphi_{312}\overset{\cdot}{\underset{3}{-}}%
\varphi_{321})\overset{\cdot}{-}(\varphi_{123}\overset{\cdot}{\underset{3}{-}%
}\varphi_{213}))\\
&  =\frac{1}{p!q!r!}\mathcal{A(\{}(\varphi_{123}\overset{\cdot}{\underset
{1}{-}}\varphi_{132})\overset{\cdot}{-}(\varphi_{231}\overset{\cdot}%
{\underset{1}{-}}\varphi_{321})\}+\{(\varphi_{231}\overset{\cdot}{\underset
{2}{-}}\varphi_{213})\overset{\cdot}{-}(\varphi_{312}\overset{\cdot}%
{\underset{2}{-}}\varphi_{132})\}\\
&  +\{(\varphi_{312}\overset{\cdot}{\underset{3}{-}}\varphi_{321}%
)\overset{\cdot}{-}(\varphi_{123}\overset{\cdot}{\underset{3}{-}}\varphi
_{213})\})\\
&  =0\text{ \ \ \ [by the general Jacobi identity]}%
\end{align*}

\end{proof}

Now we are going to show the naturality of the Fr\"{o}licher-Nijenhuis
bracket. Let $f:M\rightarrow N$ be a function of microlinear spaces.

\begin{lemma}
\label{t4.5}If tangent-vector-valued differential forms $K^{\prime
}:D\rightarrow N^{N^{D^{p}}}$ and $L^{\prime}:D\rightarrow N^{N^{D^{q}}}$ are
$f$-related to tangent-vector-valued differential forms $K:D\rightarrow
M^{M^{D^{p}}}$ and $L:D\rightarrow M^{M^{D^{q}}}$ respectively, then we have
\begin{align*}
f\circ\left(  (L\ast K)(\gamma)\right)   &  =(L^{\prime}\ast K^{\prime
})(f\circ\gamma)\\
f\circ\left(  (K\widetilde{\ast}L)(\gamma)\right)   &  =(K^{\prime}%
\widetilde{\ast}L^{\prime})(f\circ\gamma)
\end{align*}
for any $\gamma\in M^{D^{p+q}}$.
\end{lemma}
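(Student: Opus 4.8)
The plan is to reduce the two displayed identities to the single defining formula for $\ast$ together with the hypothesis that $K^{\prime}$ and $L^{\prime}$ are $f$-related to $K$ and $L$. I would first record the auxiliary fact that $f$-relatedness is stable under the curried exponential transpose: if $\varphi^{\prime}:N^{N^{D^{q}}}$ arises from $\varphi:M^{M^{D^{q}}}$ in the sense that $\varphi^{\prime}(f\circ\delta)=f(\varphi(\delta))$ for all $\delta\in M^{D^{q}}$, and similarly for another pair $\psi,\psi^{\prime}$ at arity $p$, then $\psi^{\prime}\ast\varphi^{\prime}$ is obtained from $\psi\ast\varphi$ in the same way, i.e.\ $f(\psi\ast\varphi(\gamma))=(\psi^{\prime}\ast\varphi^{\prime})(f\circ\gamma)$ for any $\gamma\in M^{D^{p+q}}$. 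This is nothing but unwinding the definition of $\ast$: on the right-hand side the inner map $(d_{1},\ldots,d_{p})\mapsto(f\circ\gamma)(d_{1},\ldots,d_{p},e_{1},\ldots,e_{q})$ equals $f$ composed with the inner map $(d_{1},\ldots,d_{p})\mapsto\gamma(d_{1},\ldots,d_{p},e_{1},\ldots,e_{q})$ appearing on the left, so applying the $\varphi$-relatedness hypothesis turns $\varphi^{\prime}$ of the former into $f$ of $\varphi$ of the latter, and then the $\psi$-relatedness hypothesis pulls the remaining $f$ outside.

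With that lemma in hand, the first claimed equality is immediate: fix $d_{1},d_{2}\in D$; by definition $(L\ast K)(d_{1},d_{2})=L_{d_{2}}\ast K_{d_{1}}$, and the $f$-relatedness of $K^{\prime},L^{\prime}$ to $K,L$ says precisely that each $K^{\prime}_{d_{1}}$ is related to $K_{d_{1}}$ and each $L^{\prime}_{d_{2}}$ to $L_{d_{2}}$ in the curried sense above, so the auxiliary lemma gives $f((L\ast K)(d_{1},d_{2})(\gamma))=(L^{\prime}\ast K^{\prime})(d_{1},d_{2})(f\circ\gamma)$ for all $\gamma\in M^{D^{p+q}}$. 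Since $d_{1},d_{2}$ were arbitrary in $D$, this is the stated formula (the excerpt writes it for $(L\ast K)(\gamma)$ with the $D^{2}$-arguments suppressed, which matches once one remembers $L\ast K$ is a map $D^{2}\to M^{M^{D^{p+q}}}$).

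For the second equality I would unravel the definition $K\widetilde{\ast}L=(d_{1},d_{2})\mapsto\bigl((K\ast L)(d_{2},d_{1})\bigr)^{\sigma}$, where $\sigma\in\mathbb{S}_{p+q}$ is the block transposition of Lemma~\ref{t4.1}. By the first equality (with the roles of $K,L$ and $p,q$ interchanged) we already know $f\bigl((K\ast L)(d_{2},d_{1})(\gamma)\bigr)=(K^{\prime}\ast L^{\prime})(d_{2},d_{1})(f\circ\gamma)$; it then remains to commute $f$ past the reindexing operation $(-)^{\sigma}$. But $(-)^{\sigma}$ acts only by permuting the $D^{p+q}$-coordinates of the argument $\gamma$ before feeding it in, so $f\bigl(\varphi^{\sigma}(\gamma)\bigr)=f\bigl(\varphi(\gamma^{\sigma})\bigr)$ and $(\varphi^{\prime})^{\sigma}(f\circ\gamma)=\varphi^{\prime}\bigl((f\circ\gamma)^{\sigma}\bigr)=\varphi^{\prime}\bigl(f\circ\gamma^{\sigma}\bigr)$; applying the already-established relatedness to $\gamma^{\sigma}$ in place of $\gamma$ closes the loop. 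I expect no genuine obstacle here — every step is a definitional unwinding — so the only care required is bookkeeping: keeping straight which slots of $D^{p+q}$ belong to the $K$-block and which to the $L$-block when the permutation $\sigma$ and the curried transposes are composed, and making sure the quantifier "for any $\gamma\in M^{D^{p+q}}$" is carried through each rewriting unchanged.
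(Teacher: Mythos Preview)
Your proposal is correct and follows essentially the same route as the paper: both arguments prove the first identity by directly unwinding the definition of $\ast$ and applying $f$-relatedness of $K$ and then of $L$ (you simply package this as an auxiliary lemma about $\psi\ast\varphi$, which is the same computation), and the second identity is then handled by a symmetric repetition --- the paper says ``by the same token,'' while you reduce it to the first identity plus the observation that $(-)^{\sigma}$ commutes with postcomposition by $f$, which amounts to the same thing.
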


\begin{proof}
For the first identity, we have
\begin{align*}
&  f\circ\left(  (L\ast K)(\gamma)\right) \\
&  =f\circ\lbrack(d,d^{\prime})\in D^{2}\mapsto L_{d^{\prime}}\{(e_{1}%
,...,e_{q})\in D^{q}\mapsto\\
K_{d}((d_{1},...,d_{p})  &  \in D^{p}\mapsto\gamma(d_{1},...,d_{p}%
,e_{1},...,e_{q}))\}]\\
&  =(d,d^{\prime})\in D^{2}\mapsto f[L_{d^{\prime}}\{(e_{1},...,e_{q})\in
D^{q}\mapsto\\
K_{d}((d_{1},...,d_{p})  &  \in D^{p}\mapsto\gamma(d_{1},...,d_{p}%
,e_{1},...,e_{q}))\}]\\
&  =(d,d^{\prime})\in D^{2}\mapsto L_{d^{\prime}}^{\prime}[(e_{1}%
,...,e_{q})\in D^{q}\mapsto\\
f\{K_{d}((d_{1},...,d_{p})  &  \in D^{p}\mapsto\gamma(d_{1},...,d_{p}%
,e_{1},...,e_{q}))\}]\\
&  =(d,d^{\prime})\in D^{2}\mapsto L_{d^{\prime}}^{\prime}[(e_{1}%
,...,e_{q})\in D^{q}\mapsto\\
\{K_{d}((d_{1},...,d_{p})  &  \in D^{p}\mapsto(f\circ\gamma)(d_{1}%
,...,d_{p},e_{1},...,e_{q}))\}]\\
&  =(L^{\prime}\ast K^{\prime})(f\circ\gamma)
\end{align*}
The second formula can be established by the same token.
\end{proof}

\begin{proposition}
\label{t4.6}Under the same assumption and notation as in the above lemma,
$\left\lceil K^{\prime},L^{\prime}\right\rceil $ is $f$-related to
$\left\lceil K,L\right\rceil $.
\end{proposition}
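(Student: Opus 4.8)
The plan is to reduce the statement to the naturality of the strong difference recorded in the folklore proposition of Section~2, using Lemmas~\ref{t4.1} and \ref{t4.5} as the intermediaries. First I would isolate an auxiliary claim: the mapping $\left\lfloor K^{\prime},L^{\prime}\right\rfloor :D\rightarrow N^{N^{D^{p+q}}}$ is $f$-related to $\left\lfloor K,L\right\rfloor :D\rightarrow M^{M^{D^{p+q}}}$, in the evident sense that $\left\lfloor K^{\prime},L^{\prime}\right\rfloor _{d}(f\circ\delta)=f(\left\lfloor K,L\right\rfloor _{d}(\delta))$ for all $d\in D$ and all $\delta\in M^{D^{p+q}}$, equivalently $\left\lfloor K^{\prime},L^{\prime}\right\rfloor (f\circ\delta)=f\circ(\left\lfloor K,L\right\rfloor (\delta))$ in $N^{D}$. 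Granting this, the proposition drops out: since $\left\lceil K,L\right\rceil =\mathcal{A}_{p,q}(\left\lfloor K,L\right\rfloor )$, so that $\left\lceil K,L\right\rceil (\delta)=\frac{1}{p!q!}\sum_{\sigma\in\mathbb{S}_{p+q}}\varepsilon_{\sigma}\left\lfloor K,L\right\rfloor (\delta^{\sigma})$ in $M^{D}$, and since $f\circ\delta^{\sigma}=(f\circ\delta)^{\sigma}$ while $t\mapsto f\circ t$ is linear on each tangent space, one gets $\left\lceil K^{\prime},L^{\prime}\right\rceil (f\circ\gamma)=\frac{1}{p!q!}\sum_{\sigma}\varepsilon_{\sigma}(f\circ(\left\lfloor K,L\right\rfloor (\gamma^{\sigma})))=f\circ(\left\lceil K,L\right\rceil (\gamma))$, which is precisely the assertion that $\left\lceil K^{\prime},L^{\prime}\right\rceil $ is $f$-related to $\left\lceil K,L\right\rceil $.

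For the auxiliary claim I would fix $\gamma\in M^{D^{p+q}}$ and bring in the evaluation maps $\mathrm{ev}_{\gamma}:M^{M^{D^{p+q}}}\rightarrow M$, $\varphi\mapsto\varphi(\gamma)$, and $\mathrm{ev}_{f\circ\gamma}:N^{N^{D^{p+q}}}\rightarrow N$; exponentials of microlinear spaces being microlinear, these are functions between microlinear spaces, so the folklore proposition is available for them. Put $g_{\gamma}=\mathrm{ev}_{\gamma}\circ(L\ast K)$ and $h_{\gamma}=\mathrm{ev}_{\gamma}\circ(K\widetilde{\ast}L)$, microsquares of $M$, and likewise $g_{f\circ\gamma}^{\prime}=\mathrm{ev}_{f\circ\gamma}\circ(L^{\prime}\ast K^{\prime})$, $h_{f\circ\gamma}^{\prime}=\mathrm{ev}_{f\circ\gamma}\circ(K^{\prime}\widetilde{\ast}L^{\prime})$. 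By Lemma~\ref{t4.1} the microsquares $L\ast K$ and $K\widetilde{\ast}L$ agree on both axes of $D^{2}$, hence on $D(2)$ --- this is exactly what entitled us to form $\left\lfloor K,L\right\rfloor $ --- so $g_{\gamma}$ and $h_{\gamma}$ agree on $D(2)$ as well; applying the folklore proposition to $\mathrm{ev}_{\gamma}$ then gives $g_{\gamma}\overset{\cdot}{-}h_{\gamma}=\mathrm{ev}_{\gamma}\circ(L\ast K\overset{\cdot}{-}K\widetilde{\ast}L)=\{d\in D\mapsto\left\lfloor K,L\right\rfloor _{d}(\gamma)\}$, and in the same way $g_{f\circ\gamma}^{\prime}\overset{\cdot}{-}h_{f\circ\gamma}^{\prime}=\{d\in D\mapsto\left\lfloor K^{\prime},L^{\prime}\right\rfloor _{d}(f\circ\gamma)\}$.

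It remains to pass through $f$. Lemma~\ref{t4.5} says exactly that $f\circ g_{\gamma}=g_{f\circ\gamma}^{\prime}$ and $f\circ h_{\gamma}=h_{f\circ\gamma}^{\prime}$. Applying the folklore proposition once more --- this time to $f:M\rightarrow N$ and the microsquares $g_{\gamma},h_{\gamma}$, which agree on $D(2)$ --- I obtain $g_{f\circ\gamma}^{\prime}\overset{\cdot}{-}h_{f\circ\gamma}^{\prime}=f\circ g_{\gamma}\overset{\cdot}{-}f\circ h_{\gamma}=f\circ(g_{\gamma}\overset{\cdot}{-}h_{\gamma})$; evaluating at an arbitrary $d\in D$ and comparing with the two identities just obtained yields $\left\lfloor K^{\prime},L^{\prime}\right\rfloor _{d}(f\circ\gamma)=f(\left\lfloor K,L\right\rfloor _{d}(\gamma))$, which is the auxiliary claim. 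I do not anticipate a genuine obstacle here: the whole argument is the bookkeeping of threading the single strong difference that defines $\left\lfloor K,L\right\rfloor $ first through evaluation at a microcube and then through $f$ itself. The one point that wants care is that both invocations of the folklore proposition carry the hypothesis of agreement on $D(2)$, and in each case this is furnished by Lemma~\ref{t4.1} in exactly the way it was used when $\left\lfloor K,L\right\rfloor $ was first defined.
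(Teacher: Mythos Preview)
Your proposal is correct and follows essentially the same route as the paper: reduce to showing that $\left\lfloor K^{\prime},L^{\prime}\right\rfloor$ is $f$-related to $\left\lfloor K,L\right\rfloor$, then combine Lemma~\ref{t4.5} with the folklore proposition on strong differences. The only difference is that you make explicit, via the evaluation maps $\mathrm{ev}_{\gamma}$, the passage from the strong difference in $M^{M^{D^{p+q}}}$ to one in $M$, whereas the paper treats the identity $\left\lfloor K,L\right\rfloor (\gamma)=(L\ast K)(\gamma)\overset{\cdot}{-}(K\widetilde{\ast}L)(\gamma)$ as immediate.
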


\begin{proof}
It suffices to show that $\left\lfloor K^{\prime},L^{\prime}\right\rfloor $ is
$f$-related to $\left\lfloor K,L\right\rfloor $, which follows from the
following calculation:
\begin{align*}
&  f\circ\left(  \left\lfloor K,L\right\rfloor (\gamma)\right) \\
&  =f\circ\{(L\ast K)(\gamma)\overset{\cdot}{-}(K\widetilde{\ast}%
L)(\gamma)\}\\
&  =f\circ\left(  (L\ast K)(\gamma)\right)  \overset{\cdot}{-}f\circ\left(
(K\widetilde{\ast}L)(\gamma)\right) \\
&  =(L^{\prime}\ast K^{\prime})(f\circ\gamma)\overset{\cdot}{-}(K^{\prime
}\widetilde{\ast}L^{\prime})(f\circ\gamma)\\
&  =\left\lfloor K^{\prime},L^{\prime}\right\rfloor (f\circ\gamma)
\end{align*}
for any $\gamma\in M^{D^{p+q}}$.
\end{proof}

\section{Lie Derivations}

Let $K\in\Omega^{k}(M;\mathbf{T}M)$ and $L\in\Omega^{l}(M;\mathbf{T}M)$. Let
$\nabla$ be a linear connection on $M$. It is easy to see that

\begin{lemma}
We have
\begin{align*}
(L\ast K)(d,0)  &  =\nabla(L\ast K)(d,0)\\
(L\ast K)(0,d)  &  =\nabla(L\ast K)(0,d)
\end{align*}
for any $d\in D$.
\end{lemma}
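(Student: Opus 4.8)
The plan is to show that the curve $d\mapsto (L\ast K)(d,0)$ in $M^{M^{D^{p+q}}}$ factors through the connection $\nabla$, i.e. that it takes values that are "$\nabla$-geodesic" in the appropriate sense, and the same for $d\mapsto (L\ast K)(0,d)$. The point is that $(L\ast K)(d_1,d_2)=L_{d_2}\ast K_{d_1}$, so when $d_2=0$ we get $L_0\ast K_d=\mathfrak{o}_q\ast K_d$, and when $d_1=0$ we get $L_d\ast K_0=L_d\ast\mathfrak{o}_p$, using Proposition \ref{2.2}(1). So first I would unwind each side of the claimed equation using the definition of $\ast$ together with $K_0=\mathfrak{o}_p$ and $L_0=\mathfrak{o}_q$.

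The key observation is that $\mathfrak{o}_q\ast K_d$, as a function of $d\in D$, is essentially a reparametrisation of $K$ itself composed with a projection, and the same for $L_d\ast\mathfrak{o}_p$; concretely, evaluating $(\mathfrak{o}_q\ast K_d)(\gamma)$ amounts to applying $K_d$ to the $p$-microcube obtained from $\gamma$ by restricting its last $q$ coordinates to $0$ — so it is a tangent vector at a point, depending on $d$ linearly in the sense guaranteed by Lemma \ref{t4.2}(2) (with $i$ ranging over $1\le i\le p$). A curve $t\colon D\to M$ (here really into $M^{M^{D^{p+q}}}$, but the connection acts pointwise) with the homogeneity property of a tangent vector satisfies $\nabla(t,t)(d,0)=t(d)$ and $\nabla(t,t)(0,d)=t(d)$ by the very definition of a linear connection (Definition 1 in \S\S5.1 of Lavendhomme \cite{l1}), since $\nabla\gamma$ restricted to $D(2)$ recovers the pair of generating tangent vectors. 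I would therefore identify $t_1=(L\ast K)(\cdot,0)$ and $t_2=(L\ast K)(0,\cdot)$ as the two generating tangent vectors of the microsquare $L\ast K$, note that they share the base point $\mathfrak{o}_{p+q}$ by Lemma \ref{t4.2}(1), and conclude $\nabla(L\ast K)(d,0)=t_1(d)=(L\ast K)(d,0)$ and likewise for the other.

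The main obstacle — really the only thing requiring care — is checking that the connection $\nabla$ on $M$ genuinely induces the pointwise operation on $M^{M^{D^{p+q}}}$ that the notation $\nabla(L\ast K)$ silently presupposes, and that with respect to it a tangent vector $t\colon D\to M^{M^{D^{p+q}}}$ (i.e. a microcube-family of tangent vectors sharing a base) satisfies the defining identities of a linear connection at the "degenerate" microsquares $(d,0)$ and $(0,d)$. Once one grants that $\nabla$ applied to a microsquare whose two generating curves are $t_1$ and $t_2$ returns, along the axes, exactly $t_1$ and $t_2$ — which is immediate from the axioms of a linear connection, as a linear connection is by definition a section of the projection $M^{D^2}\to M^{D(2)}$ — the lemma is just the remark that the two axis-curves of the microsquare $L\ast K$ are themselves tangent vectors with common base point, which is precisely the content of Lemma \ref{t4.2}. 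The verification is therefore short and I would relegate the coordinate bookkeeping to the reader, exactly as the analogous earlier lemmas do.
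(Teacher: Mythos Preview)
Your argument is correct and is essentially what the paper has in mind (the paper gives no proof beyond ``it is easy to see''): since $\nabla\eta$ abbreviates $\nabla(\eta(\cdot,0),\eta(0,\cdot))$ and a linear connection by definition satisfies $\nabla(t_{1},t_{2})(d,0)=t_{1}(d)$ and $\nabla(t_{1},t_{2})(0,d)=t_{2}(d)$, the two identities are immediate once the axis curves of $(L\ast K)(\gamma)$ share a base point. Two small cleanups: your appeal to Lemma~\ref{t4.2} is slightly misdirected, as that lemma is about $\left\lfloor K,L\right\rfloor$ rather than $L\ast K$ --- the common base point comes directly from $K_{0}=\mathfrak{o}_{p}$ and $L_{0}=\mathfrak{o}_{q}$ via Proposition~\ref{2.2}(1); and there is no need to lift $\nabla$ to $M^{M^{D^{p+q}}}$, since $\nabla(L\ast K)$ is applied pointwise in $\gamma$ to the genuine microsquare $(L\ast K)(\gamma)\in M^{D^{2}}$, exactly as the definition of $\widehat{\mathbf{L}}_{K}^{\nabla}L$ immediately following the lemma makes explicit.
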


Now we define $\widehat{\mathbf{L}}_{K}^{\nabla}L\in\widehat{\Omega}%
^{k+l}(M;\mathbf{T}M)$ to be
\[
\widehat{\mathbf{L}}_{K}^{\nabla}L(\gamma)=(L\ast K)(\gamma)\overset{\cdot}%
{-}\nabla((L\ast K)(\gamma))
\]
for any $\gamma\in M^{D^{k+l}}$. Indeed we have to verify that

\begin{lemma}
We have
\[
\widehat{\mathbf{L}}_{K}^{\nabla}L(\alpha\underset{i}{\cdot}\gamma
)=\alpha\left(  \widehat{\mathbf{L}}_{K}^{\nabla}L(\gamma)\right)
\]
for any $\alpha\in\mathbb{R}$ and any natural number $i$ with $1\leq i\leq
k+l$.
\end{lemma}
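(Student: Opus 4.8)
The plan is to derive the $(k+l)$-homogeneity of $\widehat{\mathbf{L}}_{K}^{\nabla}L$ from three facts that are already available: the behaviour of the $\ast$-product under the scalar actions $\alpha\underset{i}{\cdot}$ recorded in the proof of Lemma \ref{t4.2}, the linearity of the connection $\nabla$ (Definition 1 in \S5.1 of Lavendhomme \cite{l1}), and the compatibility of the strong difference with scalar multiplication (Proposition 5 in \S3.4 of Lavendhomme \cite{l1}). The last two will be used exactly as in the proof of Lemma \ref{t4.2}, only one ``level down'' --- in $M$ rather than in $M^{M^{D^{k+l}}}$.

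First I would fix $\gamma\in M^{D^{k+l}}$ and $\alpha\in\mathbb{R}$ and abbreviate by $\beta$ the microsquare $(d_{1},d_{2})\in D^{2}\mapsto\bigl((L\ast K)(d_{1},d_{2})\bigr)(\gamma)\in M$, so that by definition $\widehat{\mathbf{L}}_{K}^{\nabla}L(\gamma)=\beta\overset{\cdot}{-}\nabla\beta$, the strong difference being legitimate by the preceding lemma. Combining the definition of $\alpha\underset{i}{\cdot}$ on $M^{M^{D^{k+l}}}$ with the identities $\alpha\underset{i}{\cdot}(L\ast K)(d_{1},d_{2})=(L\ast K)(\alpha d_{1},d_{2})$ for $1\le i\le k$ and $\alpha\underset{i}{\cdot}(L\ast K)(d_{1},d_{2})=(L\ast K)(d_{1},\alpha d_{2})$ for $k+1\le i\le k+l$ (the two properties displayed in the proof of Lemma \ref{t4.2}, with $p=k$ and $q=l$), I would obtain
\[
(L\ast K)(\alpha\underset{i}{\cdot}\gamma)=\alpha\underset{1}{\cdot}\beta\quad(1\le i\le k),\qquad(L\ast K)(\alpha\underset{i}{\cdot}\gamma)=\alpha\underset{2}{\cdot}\beta\quad(k+1\le i\le k+l),
\]
where $\alpha\underset{1}{\cdot}$ and $\alpha\underset{2}{\cdot}$ now denote the scalar actions on microsquares of $M$ in the first and in the second coordinate.

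Next I would observe that $\nabla\beta$ depends on $\beta$ only through the two axis tangent vectors $\beta(\cdot,0)$ and $\beta(0,\cdot)$, and that scaling $\beta$ in the first (resp. second) coordinate scales exactly the first (resp. second) of these; hence the linearity of $\nabla$ yields $\nabla(\alpha\underset{1}{\cdot}\beta)=\alpha\underset{1}{\cdot}\nabla\beta$ and $\nabla(\alpha\underset{2}{\cdot}\beta)=\alpha\underset{2}{\cdot}\nabla\beta$. Therefore $\widehat{\mathbf{L}}_{K}^{\nabla}L(\alpha\underset{i}{\cdot}\gamma)$ equals $(\alpha\underset{j}{\cdot}\beta)\overset{\cdot}{-}(\alpha\underset{j}{\cdot}\nabla\beta)$ with $j=1$ if $1\le i\le k$ and $j=2$ if $k+1\le i\le k+l$; and since the strong difference of two microsquares agreeing on $D(2)$ commutes with scalar multiplication in either coordinate (Proposition 5 in \S3.4 of Lavendhomme \cite{l1}, as invoked in the proof of Lemma \ref{t4.2}), this equals $\alpha\bigl(\beta\overset{\cdot}{-}\nabla\beta\bigr)=\alpha\bigl(\widehat{\mathbf{L}}_{K}^{\nabla}L(\gamma)\bigr)$, which is the claim.

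As for difficulty, the two substitutions producing the displayed identities are routine, and the genuinely essential point is the middle step: recognizing that, because $\nabla$ sees a microsquare only through its axis tangent vectors, its linearity can be applied coordinatewise to $\alpha\underset{1}{\cdot}\beta$ and $\alpha\underset{2}{\cdot}\beta$. I do not expect a real obstacle here; the whole argument is the faithful analogue, carried out in $M$, of the homogeneity computation already performed for Lemma \ref{t4.2}, with the extra $\nabla$-term absorbed by the linearity of the connection.
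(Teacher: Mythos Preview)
Your proposal is correct and is precisely the intended unpacking of the paper's one-line proof ``By the same token as in the proof of Lemma \ref{t4.2}.'' You have made explicit the one genuinely new ingredient the paper leaves implicit --- that the linearity of $\nabla$ gives $\nabla(\alpha\underset{j}{\cdot}\beta)=\alpha\underset{j}{\cdot}\nabla\beta$ --- and then invoked Proposition~5 of \S3.4 exactly as in Lemma~\ref{t4.2}.
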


\begin{proof}
By the same token as in the proof of Lemma \ref{t4.2}.
\end{proof}

\begin{proposition}
With the above notation, we have
\begin{multline*}
\widehat{\mathbf{L}}_{K}^{\nabla}L(\gamma)\\
=\mathbf{D[}e\in D\mapsto\mathbf{q}_{(t,e)}[L\{(e_{1},...,e_{l})\in
D^{l}\mapsto K_{e}((d_{1},...d_{k})\in D^{k}\\
\mapsto\gamma(d_{1},...,d_{k},e_{1},...,e_{l}))\}]]
\end{multline*}
for any $\gamma\in M^{D^{k+l}}$, where $t\in M^{D}$ is the mapping $d\in
D\mapsto K_{d}((d_{1},...d_{k})\in D^{k}\mapsto\gamma(d_{1},...,d_{k}%
,0,...,0))\in M$.
\end{proposition}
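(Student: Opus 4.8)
The plan is to reduce the assertion, after rewriting the microsquare $(L\ast K)(\gamma)$ in convenient coordinates, to the familiar description in \S 5.1 of Lavendhomme \cite{l1} of the strong difference of a microsquare against a linear connection in terms of the parallel-transport operator $\mathbf{q}$.

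First I would unwind the left-hand side. By definition $\widehat{\mathbf{L}}_{K}^{\nabla}L(\gamma)=\delta\overset{\cdot}{-}\nabla\delta$, where $\delta\in M^{D^{2}}$ is the microsquare $(d_{1},d_{2})\mapsto(L_{d_{2}}\ast K_{d_{1}})(\gamma)$. Spelling out the definition of $\ast$ twice, one finds $\delta(d_{1},d_{2})=L_{d_{2}}\{(e_{1},\dots,e_{l})\in D^{l}\mapsto K_{d_{1}}((c_{1},\dots,c_{k})\in D^{k}\mapsto\gamma(c_{1},\dots,c_{k},e_{1},\dots,e_{l}))\}$. Putting $d_{2}=0$ and using $L_{0}=\mathfrak{o}_{l}$ gives $\delta(d_{1},0)=K_{d_{1}}((c_{1},\dots,c_{k})\in D^{k}\mapsto\gamma(c_{1},\dots,c_{k},0,\dots,0))=t(d_{1})$, so that $\delta(\cdot,0)=t$; and reading $L$ in its orthodox guise $L\colon M^{D^{l}}\to M^{D}$, the tangent vector $d_{2}\mapsto\delta(e,d_{2})$ is exactly $L\{(e_{1},\dots,e_{l})\in D^{l}\mapsto K_{e}((c_{1},\dots,c_{k})\in D^{k}\mapsto\gamma(c_{1},\dots,c_{k},e_{1},\dots,e_{l}))\}$, whose base point $\delta(e,0)$ equals $t(e)$. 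Consequently the argument of $\mathbf{q}_{(t,e)}$ on the right-hand side is precisely the tangent vector $\delta(e,\cdot)$, and the right-hand side is $\mathbf{D}[e\in D\mapsto\mathbf{q}_{(\delta(\cdot,0),e)}(\delta(e,\cdot))]$.

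It then remains to establish the general identity $\delta\overset{\cdot}{-}\nabla\delta=\mathbf{D}[e\in D\mapsto\mathbf{q}_{(\delta(\cdot,0),e)}(\delta(e,\cdot))]$ for an arbitrary microsquare $\delta$ on $M$. Both sides are well-defined tangent vectors at $\delta(0,0)$: the left one because $\delta$ and $\nabla\delta$ agree on the two axes (hence on $D(2)$, by microlinearity), as furnished by the first lemma of this section, which makes the strong difference legitimate; the right one because $e\mapsto\mathbf{q}_{(\delta(\cdot,0),e)}(\delta(e,\cdot))$ is a map of $D$ into the Euclidean $\mathbb{R}$-module of tangent vectors based at $\delta(0,0)$, to which $\mathbf{D}$ applies. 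This identity is exactly the connection-theoretic reading of the covariant defect of a microsquare provided in \S 5.1 of Lavendhomme \cite{l1}, which I would invoke directly; alternatively it can be verified by hand by writing $\nabla\delta=\nabla(\delta(\cdot,0),\delta(0,\cdot))$, observing that $\mathbf{q}_{(\delta(\cdot,0),e)}$ carries the $e$-slice of $\nabla\delta$ back to $\delta(0,\cdot)$ by the defining property of parallel transport along $\delta(\cdot,0)$, so that the constant term drops out under $\mathbf{D}$, and then appealing to the calculus of strong differences of \S 3.4 of Lavendhomme \cite{l1}.

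The main obstacle is bookkeeping rather than substance. One must keep careful track of which of the three equivalent incarnations of the tangent-vector-valued form $L$ is in force at each step (the orthodox $M^{D^{l}}\to M^{D}$ picture inside $\mathbf{q}_{(t,e)}$, as against the $D\to M^{M^{D^{l}}}$ picture used in the double $\ast$), and one must check that all the compatibility conditions that render the several strong differences and the operator $\mathbf{q}$ meaningful are genuinely in force; this is precisely what the two lemmas of this section preceding the statement supply, together with the sign conventions of \S 3.4 and \S 5.1 of Lavendhomme \cite{l1}. Once $\delta=(L\ast K)(\gamma)$ has been identified together with its first axis $t$ and its slices $\delta(e,\cdot)$, the proposition is nothing but that connection-theoretic identity read back through this identification.
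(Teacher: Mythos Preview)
Your argument is correct and follows essentially the same route as the paper: both reduce the assertion to the standard connection-theoretic identity $\delta\overset{\cdot}{-}\nabla\delta=\mathbf{D}[e\mapsto\mathbf{q}_{(\delta(\cdot,0),e)}(\delta(e,\cdot))]$ from Lavendhomme \cite{l1}, after identifying the microsquare $\delta=(L\ast K)(\gamma)$ and its first axis $t$. The only discrepancy is bibliographic: the relevant results in Lavendhomme are Propositions~3 and~7 of \S 5.2 (covariant derivative and parallel transport), not \S 5.1, so you should adjust the citation accordingly.
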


\begin{proof}
By Propositions 3 and 7 in \S \S 5.2 of Lavendhomme \cite{l1}.
\end{proof}

We define $\mathbf{L}_{K}^{\nabla}L\in\Omega^{k+l}(M;\mathbf{T}M)$ to be
\[
\mathbf{L}_{K}^{\nabla}L=\mathcal{A}_{k,l}\left(  \widehat{\mathbf{L}}%
_{K}^{\nabla}L\right)
\]

\begin{proposition}
Continuing with the above notation and assuming that the linear connection
$\nabla$ is symmetric, we have
\[
\left\lceil K,L\right\rceil =\mathbf{L}_{K}^{\nabla}L-(-1)^{kl}\mathbf{L}%
_{L}^{\nabla}K
\]

\end{proposition}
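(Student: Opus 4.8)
The plan is to unwind the definitions of both sides and exhibit a single strong difference that computes both. Recall that $\left\lceil K,L\right\rceil = \mathcal{A}_{p,q}(\left\lfloor K,L\right\rfloor)$ with $\left\lfloor K,L\right\rfloor = L\ast K \overset{\cdot}{-} K\widetilde{\ast}L$, while $\mathbf{L}_{K}^{\nabla}L = \mathcal{A}_{k,l}(\widehat{\mathbf{L}}_{K}^{\nabla}L)$ with $\widehat{\mathbf{L}}_{K}^{\nabla}L(\gamma) = (L\ast K)(\gamma)\overset{\cdot}{-}\nabla((L\ast K)(\gamma))$. So I would first compute $\mathbf{L}_{K}^{\nabla}L - (-1)^{kl}\mathbf{L}_{L}^{\nabla}K$ by linearity of the antisymmetrization operator $\mathcal{A}$, reducing the claim to an identity between semiforms before antisymmetrizing, modulo the bookkeeping of the permutation $\sigma$ relating $L\ast K$ and $K\ast L$ (Lemma \ref{t4.1}) — exactly the same $\varepsilon_\sigma=(-1)^{pq}$ sign-juggling already performed in the proof of Theorem \ref{t4.4}(1), here with $p=k$, $q=l$. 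The upshot should be that it suffices to prove, for each $\gamma$,
\[
(L\ast K)(\gamma)\overset{\cdot}{-}(K\widetilde{\ast}L)(\gamma)
=\bigl((L\ast K)(\gamma)\overset{\cdot}{-}\nabla((L\ast K)(\gamma))\bigr)
-\bigl((K\widetilde{\ast}L)(\gamma)\overset{\cdot}{-}\nabla((K\widetilde{\ast}L)(\gamma))\bigr)
\]
in the appropriate relativized sense, once the $\sigma$-twist has been accounted for via Lemma \ref{2.3}.

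The key algebraic fact driving this is the behaviour of $\overset{\cdot}{-}$ under a common ``correction term'': if $u, v \in M^{D^2}$ agree on $D(2)$ and both have the same associated microsquare $w$ under $\nabla$ (here $w = \nabla u = \nabla v$ because $L\ast K$ and $K\widetilde{\ast}L$ share the same restrictions to the axes by Lemma \ref{t4.1}, hence the same $\nabla$-image), then $u \overset{\cdot}{-} v = (u\overset{\cdot}{-}w) - (v\overset{\cdot}{-}w)$, where the right-hand side is a difference in the relevant tangent vector space and the cancellation of $w$ is exactly the cocycle-type property of strong difference — this is the synthetic analogue of $u-v = (u-w)-(v-w)$. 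I would invoke the relevant structural facts about $\overset{\cdot}{-}$ from Chapter 3 (and §5.2) of Lavendhomme \cite{l1}, precisely as the preceding proposition relating $\widehat{\mathbf{L}}_{K}^{\nabla}L$ to $\mathbf{D}[\cdots]$ already does. The symmetry hypothesis on $\nabla$ is what guarantees that the correction microsquares for $L\ast K$ and for $K\widetilde{\ast}L$ are genuinely the \emph{same} element (the symmetric connection makes $\nabla$ insensitive to the swap built into $\widetilde{\ast}$), which is the crux: without symmetry the two $\nabla$-images would differ by the torsion and the telescoping would fail.

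Concretely, the steps in order: (i) expand $\mathbf{L}_{K}^{\nabla}L - (-1)^{kl}\mathbf{L}_{L}^{\nabla}K$ using the definition of $\mathbf{L}$ and the linearity of $\mathcal{A}$, converting $\mathbf{L}_{L}^{\nabla}K$ into an expression involving $L\ast K$ twisted by $\sigma$ via Lemma \ref{t4.1}, just as in Theorem \ref{t4.4}(1); (ii) check that $\nabla((L\ast K)(\gamma))$ and $\nabla((K\widetilde{\ast}L)(\gamma))$ coincide, using symmetry of $\nabla$ together with the axis-agreement from Lemma \ref{t4.1} — this is the step I expect to be the main obstacle, since it requires carefully tracking how $\nabla$ interacts with the $\sigma$-reindexing and invoking the precise definition of a symmetric linear connection from §5.1 of \cite{l1}; (iii) apply the telescoping identity for $\overset{\cdot}{-}$ around the common value $w = \nabla((L\ast K)(\gamma)) = \nabla((K\widetilde{\ast}L)(\gamma))$ to rewrite $(L\ast K)(\gamma)\overset{\cdot}{-}(K\widetilde{\ast}L)(\gamma)$ as the difference $\widehat{\mathbf{L}}_{K}^{\nabla}L(\gamma) - (\text{twisted }\widehat{\mathbf{L}}_{L}^{\nabla}K)(\gamma)$; (iv) antisymmetrize via $\mathcal{A}_{k,l}$ and use Lemma \ref{2.3} to push the $\sigma$-twist through the strong differences, collecting the sign $\varepsilon_\sigma = (-1)^{kl}$ to land exactly on $\mathbf{L}_{K}^{\nabla}L - (-1)^{kl}\mathbf{L}_{L}^{\nabla}K$. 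Everything after step (ii) is routine in the style of the earlier proofs in this paper.
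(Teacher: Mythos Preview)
Your plan is essentially the paper's: reduce to the semiform identity $\left\lfloor K,L\right\rfloor = \widehat{\mathbf{L}}_{K}^{\nabla}L - (\widehat{\mathbf{L}}_{L}^{\nabla}K)^{\sigma}$ and prove that by telescoping the strong difference through the common $\nabla$-correction, exactly as in Proposition~3 of \S 5.3 in Lavendhomme \cite{l1}, which the paper simply cites without further elaboration.

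One correction, though, on where symmetry actually enters. Your step~(ii) equality $\nabla((L\ast K)(\gamma)) = \nabla((K\widetilde{\ast}L)(\gamma))$ holds \emph{without} any hypothesis on $\nabla$: by the paper's convention $\nabla\mu = \nabla(\mu(\cdot,0),\mu(0,\cdot))$ depends only on the two axes of the microsquare, and Lemma~\ref{t4.1} already gives that $(L\ast K)(\gamma)$ and $(K\widetilde{\ast}L)(\gamma)$ share both axes. So the telescoping in your displayed identity is free. Symmetry is needed one step later, when you identify the second telescoped term $(K\widetilde{\ast}L)(\gamma)\overset{\cdot}{-}\nabla((K\widetilde{\ast}L)(\gamma))$ with $(\widehat{\mathbf{L}}_{L}^{\nabla}K)^{\sigma}(\gamma) = (K\ast L)(\gamma^{\sigma})\overset{\cdot}{-}\nabla((K\ast L)(\gamma^{\sigma}))$: since $(K\widetilde{\ast}L)(\gamma)$ is the \emph{transpose} (swap of $d_{1},d_{2}$) of $(K\ast L)(\gamma^{\sigma})$, you need $\nabla$ of the transpose to equal the transpose of $\nabla$, and that is exactly the content of symmetry. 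Once that is in place, invariance of $\overset{\cdot}{-}$ under simultaneous transposition finishes the identification. With this relocation of the symmetry argument your outline is complete.
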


\begin{proof}
It suffices to show that
\[
\left\lfloor K,L\right\rfloor =\widehat{\mathbf{L}}_{K}^{\nabla}L-\left(
\widehat{\mathbf{L}}_{L}^{\nabla}K\right)  ^{\sigma}%
\]
with $\sigma=\left(
\begin{array}
[c]{cccccc}%
1 & ... & k & k+1 & ... & k+l\\
k+1 & ... & k+l & 1 & ... & k
\end{array}
\right)  \in\mathbb{S}_{k+l}$, which follows by the same token as in
Proposition 3 in \S \S 5.3 of Lavendhomme \cite{l1}.
\end{proof}

Finally we are going to establish the naturality of Lie derivations. Let
$f:M\rightarrow N$ be a function of microlinear spaces with a linear
connection $\nabla^{\prime}$ on $N$ being $f$-related to the linear connection
$\nabla$ on $M$.

\begin{lemma}
Let $K^{\prime}\in\Omega^{k}(N;\mathbf{T}N)$ and $L^{\prime}\in\Omega
^{l}(N;\mathbf{T}N)$ be $f$-related to $K\in\Omega^{k}(M;\mathbf{T}M)$ and
$L\in\Omega^{l}(M;\mathbf{T}M)$ respectively. Then we have
\[
f\circ\left(  \nabla(L\ast K)(\gamma)\right)  =\nabla^{\prime}(L^{\prime}\ast
K^{\prime})(f\circ\gamma)
\]
for any $\gamma\in M^{D^{k+l}}$.
\end{lemma}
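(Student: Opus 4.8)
The plan is to reduce the claimed identity about $\nabla(L\ast K)$ to the already-established naturality of $L\ast K$ itself (Lemma \ref{t4.5}) together with the hypothesis that $\nabla'$ is $f$-related to $\nabla$. Recall from the Preliminaries that $\nabla$ acts on a microsquare $\delta\in M^{D^2}$ via $\nabla\delta=\nabla(t_1,t_2)$ where $t_1=\delta(\cdot,0)$ and $t_2=\delta(0,\cdot)$, and that $\nabla'$ being $f$-related to $\nabla$ means $f\circ\nabla(t_1,t_2)=\nabla'(f\circ t_1,f\circ t_2)$ whenever $t_1(0)=t_2(0)$. So the first step is to unwind what $\nabla(L\ast K)(\gamma)$ means: for fixed $\gamma\in M^{D^{k+l}}$, the expression $(L\ast K)(\gamma)$ is a microsquare valued in $M^{D}$ — more precisely, applying $(L\ast K)$ pointwise in the two $D$-parameters $d_1,d_2$ gives a map $D^2\to M^{M^{D^{k+l}}}$ which, evaluated at $\gamma$, yields an element of $(M^D)^{D^2}$, i.e. a microsquare on $M^D$, to which $\nabla$ (relativized as in Section 2) applies.

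Next I would write out the two boundary curves of that microsquare. By the definition of $L\ast K$ as $(L\ast K)(d_1,d_2)=L_{d_2}\ast K_{d_1}$, the curve $d_1\mapsto (L\ast K)(d_1,0)(\gamma)=(L_0\ast K_{d_1})(\gamma)$ and the curve $d_2\mapsto(L\ast K)(0,d_2)(\gamma)=(L_{d_2}\ast K_0)(\gamma)$ are precisely the tangent vectors $t_1,t_2\in \mathbf{T}M$ (well, the appropriate relativized tangent objects) to which $\nabla$ is applied, and they agree at the origin because of the normalization conditions $K_0=\mathfrak{o}_k$, $L_0=\mathfrak{o}_l$ (Proposition \ref{2.2}). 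Applying $f$ and using Lemma \ref{t4.5}, we get $f\circ t_1 = $ the corresponding boundary curve of $(L'\ast K')(f\circ\gamma)$, and similarly for $t_2$. Then the $f$-relatedness of $\nabla'$ to $\nabla$ — applied in its relativized form, exactly as the relativized strong differences were set up in Section 2 — gives $f\circ\nabla(L\ast K)(\gamma)=\nabla'(L'\ast K')(f\circ\gamma)$, which is the claim.

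The main obstacle is bookkeeping rather than conceptual: one must be scrupulous about which copies of $D$ are being used as the "connection directions" versus the ambient $D^{k+l}$ that $L\ast K$ eats, i.e. that the $\nabla$ appearing here is a suitably relativized linear connection (acting on $M^D$, or on the exponential $M^{M^{D^{k+l}}}$-valued microsquares) in the sense sketched around $D^3=D\times D\times D$ in Section 2. Once the relativization is pinned down, Lemma \ref{t4.5} handles the $L\ast K$ part verbatim and the $f$-relatedness hypothesis on $\nabla'$ handles the $\nabla$ part, and the two combine because $f$-relatedness of connections is designed to commute with pushing curves forward by $f$. I would therefore structure the proof as: (i) express $\nabla(L\ast K)(\gamma)$ as $\nabla(t_1,t_2)$ with $t_1,t_2$ the explicit boundary curves; (ii) verify $t_1(0)=t_2(0)$ from the normalizations; (iii) invoke Lemma \ref{t4.5} to identify $f\circ t_1$ and $f\circ t_2$ with the boundary curves of $(L'\ast K')(f\circ\gamma)$; (iv) apply $f$-relatedness of $\nabla'$ to $\nabla$. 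No step requires a genuinely new idea, so the proof in the paper will presumably just say "by Lemma \ref{t4.5} and the $f$-relatedness of $\nabla'$ to $\nabla$," possibly with a one-line display echoing the computation in Proposition \ref{t4.6}.
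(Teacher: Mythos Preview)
Your approach is correct and matches the paper's own proof, which simply says ``by the same token as in the proof of Lemma~\ref{t4.5}'' --- i.e., push $L\ast K$ through $f$ via Lemma~\ref{t4.5} and then invoke the $f$-relatedness of $\nabla'$ to $\nabla$. One small correction to your bookkeeping: since $(L\ast K)(d_1,d_2)=L_{d_2}\ast K_{d_1}\in M^{M^{D^{k+l}}}$, evaluating at $\gamma$ lands in $M$ (not $M^{D}$), so $(L\ast K)(\gamma)$ is an ordinary microsquare on $M$ and $\nabla$ applies directly with no relativization needed; with that fix your steps (i)--(iv) go through verbatim and in fact more simply than you feared.
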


\begin{proof}
By the same token as in the proof of Lemma \ref{t4.5}.
\end{proof}

\begin{proposition}
Under the same assumption and notation as in the above lemma, $\mathbf{L}%
_{K^{\prime}}^{\nabla^{\prime}}L^{\prime}$ is $f$-related to $\mathbf{L}%
_{K}^{\nabla}L$.
\end{proposition}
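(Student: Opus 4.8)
The plan is to reduce the naturality of the Lie derivation $\mathbf{L}_K^\nabla L$ to the naturality of its unantisymmetrized counterpart $\widehat{\mathbf{L}}_K^\nabla L$, exactly as was done for the Fr\"{o}licher-Nijenhuis bracket in Proposition \ref{t4.6}. Since $\mathbf{L}_K^\nabla L = \mathcal{A}_{k,l}(\widehat{\mathbf{L}}_K^\nabla L)$ and the operator $\mathcal{A}$ merely permutes the argument $\gamma$ and takes a signed sum, $f$-relatedness is preserved under $\mathcal{A}$: if $\widehat{\mathbf{L}}_{K'}^{\nabla'}L'(f\circ\gamma) = f(\widehat{\mathbf{L}}_K^\nabla L(\gamma))$ for all $\gamma$, then applying this to each $\gamma^\sigma$ and summing with signs gives $\mathbf{L}_{K'}^{\nabla'}L'(f\circ\gamma) = f(\mathbf{L}_K^\nabla L(\gamma))$, because $(f\circ\gamma)^\sigma = f\circ(\gamma^\sigma)$. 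So the whole content is the hatted statement.

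First I would unwind the definition: for $d\in D$ and $\gamma\in M^{D^{k+l}}$,
\[
\widehat{\mathbf{L}}_K^\nabla L(\gamma) = (L\ast K)(\gamma)\overset{\cdot}{-}\nabla((L\ast K)(\gamma)),
\]
and similarly with primes on the $N$ side. Here $(L\ast K)(\gamma)$ is a microsquare in $M$ (an element of $M^{D^2}$), and $\nabla((L\ast K)(\gamma))$ is the microsquare obtained by applying the connection to its two edge tangent vectors; the strong difference of these two microsquares, which agree on $D(2)$ by the Lemma preceding the definition, is a tangent vector. I would then compute $f$ applied to this tangent vector. By Proposition 1 (naturality of strong difference) together with Proposition 1 from Section 2 — or rather the fact that $f$ commutes with strong differences of microsquares agreeing on $D(2)$ — we get
\[
f\circ\left(\widehat{\mathbf{L}}_K^\nabla L(\gamma)\right) = f\circ\left((L\ast K)(\gamma)\right)\overset{\cdot}{-}f\circ\left(\nabla((L\ast K)(\gamma))\right).
\]
By Lemma \ref{t4.5} the first term equals $(L'\ast K')(f\circ\gamma)$, and by the Lemma immediately preceding this Proposition the second term equals $\nabla'((L'\ast K')(f\circ\gamma))$. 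Hence the right-hand side is $\widehat{\mathbf{L}}_{K'}^{\nabla'}L'(f\circ\gamma)$, which is precisely $f$-relatedness of the hatted Lie derivations.

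The main obstacle — really the only point requiring care — is justifying that $f$ commutes with the strong difference $\overset{\cdot}{-}$ appearing in the definition of $\widehat{\mathbf{L}}_K^\nabla L$. Proposition 1 as stated is about $\gamma_1\overset{\cdot}{-}\gamma_2$ for $\gamma_i\in M^{D^2}$ agreeing on $D(2)$, so I must check that the two microsquares $(L\ast K)(\gamma)$ and $\nabla((L\ast K)(\gamma))$ do agree on $D(2)$; this is exactly the content of the Lemma just before the definition of $\widehat{\mathbf{L}}_K^\nabla L$, which says their restrictions to the two axes coincide, and since a microsquare's restriction to $D(2)$ is determined by its two axis-restrictions this suffices. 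Everything else is a routine chain of substitutions using results already proved, so the proof will be short: invoke Proposition 1, then Lemma \ref{t4.5}, then the preceding Lemma, then pass through $\mathcal{A}_{k,l}$ using $(f\circ\gamma)^\sigma = f\circ\gamma^\sigma$. I would end the proof with the remark that the antisymmetrization step is identical to the one in the proof of Proposition \ref{t4.6} and can be left to the reader.
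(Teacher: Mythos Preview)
Your proposal is correct and is exactly the argument the paper has in mind: the paper's proof is the single line ``By the same token as in the proof of Proposition \ref{t4.6}'', and you have faithfully unpacked that token, reducing to $f$-relatedness of $\widehat{\mathbf{L}}_{K}^{\nabla}L$, applying Proposition 1 to pass $f$ through the strong difference, and then invoking Lemma \ref{t4.5} and the immediately preceding lemma for the two terms.
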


\begin{proof}
By the same token as in the proof of Proposition \ref{t4.6}.
\end{proof}

\end{document}